\newcommand{\Rb}{{\mathbb R}}
\newcommand{\Cb}{{\mathbb C}}
\newtheorem{thm}{Theorem}[section]
\newtheorem*{thm*}{Theorem}
\newtheorem{cor}[thm]{Corollary}
\newtheorem*{cor*}{Corollary}
\newtheorem{lem}[thm]{Lemma}
\newtheorem*{con*}{Conjecture}
\newtheorem*{prob*}{Problem}
\theoremstyle{definition}
\newtheorem{defn}[thm]{Definition}
\theoremstyle{remark}
\newtheorem{rem}[thm]{Remark}
\begin{document}
\title{Strict $p$-negative type of a metric space}
\author[Hanfeng Li]{Hanfeng Li$\,^\flat$}\thanks{$^\flat$ Partially supported by NSF grant DMS-0701414.}
\address{Department of Mathematics, SUNY at Buffalo, Buffalo, NY 14260, U.S.A.}
\email{hfli@math.buffalo.edu}
\author[Anthony Weston]{Anthony Weston$\,^\natural$}\thanks{$^\natural$ Partially supported by
internal research grants at Canisius College.}
\address{Department of Mathematics \& Statistics, Canisius College,
Buffalo, NY 14208, U.S.A.}
\email{westona@canisius.edu}
\subjclass[2000]{46B20}

\keywords{Finite metric spaces, strict $p$-negative type, generalized roundness}

\begin{abstract}
Doust and Weston \cite{DW} have introduced a new method called \textit{enhanced negative type}
for calculating a non-trivial lower bound $\wp_{T}$ on the supremal strict
$p$-negative type of any given finite metric tree $(T,d)$. In the context of
finite metric trees any such lower bound $\wp_{T} > 1$ is deemed to be non-trivial.
In this paper we refine the technique of enhanced negative type and show how it
may be applied more generally to any finite metric space $(X,d)$ that is known
to have strict $p$-negative type for some $p \geq 0$. This allows us to significantly
improve the lower bounds on the supremal strict $p$-negative type of finite metric
trees that were given in \cite[Corollary 5.5]{DW} and, moreover, leads in to one
of our main results: The supremal $p$-negative type of a finite metric space
cannot be strict. By way of application we are then able to exhibit large classes
of finite metric spaces (such as finite isometric subspaces of Hadamard
manifolds) that must have strict $p$-negative type for some $p > 1$. We also show
that if a metric space (finite or otherwise) has $p$-negative type for some
$p > 0$, then it must have strict $q$-negative type for all $q \in [0,p)$. This
generalizes Schoenberg \cite[Theorem 2]{S2} and leads to a complete classification
of the intervals on which a metric space may have strict $p$-negative type.
\end{abstract}
\maketitle

\section{Introduction and Synopsis}
The study of positive definite kernels and the related notion of $p$-negative type metrics
dates back to the early 1900s with some antecedents in the 1800s. A major theme that
emerged was the search for metric characterizations of subsets of Hilbert space up to isometry.
Significant initial results on this classical embedding problem were obtained
by Cayley \cite{C}, Menger \cite{M1, M2, M3} and Schoenberg \cite{S1, S2, S3}.
We note in particular \cite[Theorem 1]{S3}:
A metric space is isometric to a subset of Hilbert space if and only if it has
$2$-negative type. This result was spectacularly generalized to the category of normed
spaces by Bretagnolle et al.\ \cite[Theorem 2]{BDK}:
A real normed space is linearly isometric to a subspace of some
$L_{p}$-space ($1 \leq p \leq 2$) if and only if it has $p$-negative type.
It remains a prominent question to give a complete generalization of this result to the
setting of non-commutative $L_{p}$-spaces. See, for example, Junge \cite{J}.

More recently, difficult questions concerning $p$-negative type metrics have figured
prominently in theoretical computer science. A prime example is the recently refuted
\textit{Goemans-Linial conjecture}: Every metric space of $1$-negative type bi-Lipschitz
embeds into some $L_{1}$-space. Although this conjecture clearly holds for normed spaces
by \cite[Theorem 2]{BDK} (with $p=1$), it is not true for arbitrary metric spaces.
This was shown by Khot and Vishnoi \cite{KV}. Subsequently, Lee
and Naor \cite{LN} have shown that there is no metric version of \cite[Theorem 2]{BDK}
(modulo bi-Lipschitz embeddings) for any $p \in [1,2)$. More precisely,
we have \cite[Theorem 1.2]{LN}: For each $p \in [1,2)$ there is a metric space $(X,d)$
of $p$-negative type which does not bi-Lipschitz embed into any $L_{p}$-space.

The related notion of strict $p$-negative type has been studied rather less well than its
classical counterpart and most known results deal with the case $p=1$.
The present work is motivated by functional analytic questions
that arise naturally from the papers of Hjorth et al.\ \cite{HKM, HLM}. Both \cite{HKM}
and \cite{HLM} focus on examples and properties of finite metric spaces of strict $1$-negative type.
One theme of these papers is to determine global geometric properties of finite metric spaces
of strict $1$-negative type. As an example we mention \cite[Theorem 3.9]{HLM}:
If a finite metric space is of strict $1$-negative type, then it has a unique $\infty$-extender.
It is also natural to ask for conditions on a finite metric space which will guarantee that
it has strict $1$-negative type. One such result is \cite[Theorem 5.2]{HLM}:
If a finite metric space is hypermetric and regular, then it is of strict $1$-negative type.

The theme of this paper is to determine basic properties of strict $p$-negative
metrics for all $p \geq 0$. In particular, we aim to move beyond the familiar case $p=1$,
thereby setting up the rudiments of a basic theory of strict $p$-negative type metrics.

Section $2$ is dedicated to a review of the salient features of generalized roundness, negative
type, and strict negative type.
In Definition \ref{NGAP} we recall the notion of the (normalized)
$p$-negative type gap $\Gamma_{X}^{p}$ of a metric space $(X,d)$. This parameter was recently
introduced by Doust and Weston \cite{DW, DW2} in order to obtain non-trivial  lower bounds on
the maximal $p$-negative of finite metric trees. Basic properties of $\Gamma_{X}^{p}$ will play
a vital r\^{o}le in our computations in Section $3$.

The observation is made in \cite[Theorem 5.2]{DW} that if the $p$-negative type gap
$\Gamma_{X}^{p}$ of a finite metric space $(X,d)$ is positive for some $p \geq 0$, then $(X,d)$
must have strict $q$-negative type on some interval of the form $[p, p + \zeta)$ where $\zeta > 0$.
However, the authors only provide an explicit value for $\zeta$ in the case $p = 1$. Letting $n=|X|$,
the value of $\zeta$ given in this case is $O(1/n^{2})$. (See \cite[Theorem 5.1]{DW}.)
The purpose of Section $3$ is to give a precise quantitative version of \cite[Theorem 5.2]{DW}
which yields significantly improved values of $\zeta$ for all $p \geq 0$.
In fact, for each $p \geq 0$, our value of $\zeta$ is $O(1)$.
The precise statement of this result is given in Theorem \ref{mainthm}. By way of application,
Theorem \ref{mainthm} leads to significantly improved lower bounds on the maximal $p$-negative type
of finite metric trees. These are given in Corollary \ref{treebound}. Then in Remark \ref{treerem}
we point out that the estimates given in Corollary \ref{treebound} are reasonably sharp
for finite metric trees that resemble stars.
This suggests there is little room for improvement in the statement of
Theorem \ref{mainthm} (in general).

In Section $4$ we use Theorem \ref{mainthm} and an elementary compactness argument to derive a key
result of this paper: The supremal $p$-negative type of a finite metric space cannot be strict. This is
done in Corollary \ref{strict} to Theorem \ref{mainthm2}.
Using known results we are then able to exhibit large classes of finite
metric spaces, all of which must have strict $p$-negative type for some $p > 1$. For example, any finite
isometric subspace of a Hadamard manifold must have strict $p$-negative type for some $p > 1$.
We collate an array of such examples in Corollary \ref{strictexs}.

The main results of Section $5$ are Theorems \ref{sint}, \ref{finint} and \ref{infint}.
These theorems generalize \cite[Theorem 2]{S2}. For example, in Theorem \ref{sint}
we show that if a metric space (finite or otherwise) has $p$-negative type for
some $p > 0$, then it must have strict $q$-negative type for all $q \in [0,p)$.
This allows us to precisely codify the types of intervals on which a metric space
may have strict $p$-negative type. It is interesting to note that finite metric spaces
behave quite differently to infinite metric spaces in this respect.
These differences are highlighted in Theorems \ref{finint} and \ref{infint}.
Understanding how strict negative type behaves on intervals
leads to further examples of metric spaces that have non-trivial strict $p$-negative type.
We then conclude the paper with the observation in Remark \ref{lastrem} that Theorems \ref{mainthm},
\ref{mainthm2} and \ref{sint} (as well as several of our corollaries) actually hold more generally for
finite semi-metric spaces. This is because we do not use the triangle inequality at any point in our
definitions or proofs.

Throughout this paper the set of natural numbers $\mathbb{N}$ is taken to consist of all positive integers
and sums indexed over the empty set are always taken to be zero. Given a real number $x$, we are using
$\lfloor x \rfloor$ to denote the largest integer that does not exceed $x$, and $\lceil x \rceil$ to
denote the smallest integer which is not less than $x$.

\section{A framework for ordinary and strict $p$-negative type}
We begin by recalling some theoretical features of (strict) $p$-negative type and its relationship
to (strict) generalized roundness. More detailed accounts may be found in
the work of Benyamini and Lindenstrauss
\cite{BL}, Deza and Laurent \cite{DL}, Prassidis and Weston \cite{PW}, and Wells and Williams \cite{WW}.
These works emphasize the interplay between the classical $p$-negative type inequalities and isometric,
Lipschitz or uniform embeddings. They also indicate applications to more contemporary areas of interest
such as theoretical computer science.
One of the most important results for our purposes is the equivalence
of (strict) $p$-negative type and (strict) generalized roundness $p$.
These equivalences are described in Theorem \ref{REMGR}.

\begin{defn}\label{types} Let $p \geq 0$ and let $(X,d)$ be a metric space. Then:
\begin{enumerate}
\item[(a)] $(X,d)$ has $p$-{\textit{negative type}} if and only if for all natural numbers $k \geq 2$,
all finite subsets $\{x_{1}, \ldots , x_{k} \} \subseteq X$, and all choices of real numbers $\eta_{1},
\ldots, \eta_{k}$ with $\eta_{1} + \cdots + \eta_{k} = 0$, we have:

\begin{eqnarray}\label{ONE}
\sum\limits_{1 \leq i,j \leq k} d(x_{i},x_{j})^{p} \eta_{i} \eta_{j}  & \leq & 0.
\end{eqnarray}

\item[(b)] $(X,d)$ has \textit{strict} $p$-{\textit{negative type}} if and only if it has $p$-negative type
and the associated inequalities (\ref{ONE}) are all strict except in the trivial case
$(\eta_{1}, \ldots, \eta_{k})$ $= (0, \ldots, 0)$.
\end{enumerate}
\end{defn}

\begin{rem} Every metric space obviously has strict $0$-negative type. It is also the case that
every finite metric space has strict $p$-negative type for some $p > 0$.
This follows from Weston \cite[Theorem 4.3]{W}, Theorem \ref{REMGR} (a) and Theorem \ref{sint}.
\end{rem}
\noindent It is possible to reformulate both ordinary and strict $p$-negative type in terms of an
invariant known as \textit{generalized roundness} from the uniform theory of Banach spaces. Generalized roundness
was introduced by Enflo \cite{E} in order to solve (in the negative) \textit{Smirnov's Problem}: Is
every separable metric space uniformly homeomorphic to a subset of Hilbert space? The analog of this
problem for coarse embeddings was later raised by Gromov \cite{G} and solved negatively by
Dranishnikov et al.\ \cite{DGLY}. Prior to introducing generalized roundness in Definition
\ref{GRUNT} (a) we shall develop some intermediate technical notions in order to streamline the
exposition in the remainder of this paper.

\begin{defn}\label{CUTIE}
Let $s,t$ be arbitrary natural numbers and let $X$ be any set.
\begin{enumerate}
\item[(a)] An $(s,t)$-\textit{simplex} in $X$ is an $(s+t)$-vector $(a_{1}, \ldots , a_{s}, b_{1}, \ldots ,b_{t})
\in X^{s+t}$ consisting of $s+t$ pairwise distinct coordinates
$a_{1}, \ldots, a_{s}, b_{1}, \ldots , b_{t} \in X$. Such a simplex will be denoted by $D=[a_{j};b_{i}]_{s,t}$.

\item[(b)] A \textit{load vector} for an $(s,t)$-simplex $D=[a_{j};b_{i}]_{s,t}$ in $X$ is an arbitrary vector
$\vec{\omega} = (m_{1}, \ldots m_{s}, n_{1}, \ldots , n_{t}) \in \mathbb{R}^{s+t}_{+}$ that assigns a positive weight
$m_{j} > 0$ or $n_{i} > 0$ to each vertex $a_{j}$ or $b_{i}$ of $D$, respectively.

\item[(c)] A \textit{loaded} $(s,t)$-\textit{simplex} in $X$ consists of an $(s,t)$-simplex $D=[a_{j};b_{i}]_{s,t}$ in $X$
together with a load vector $\vec{\omega} =(m_{1}, \ldots ,m_{s}, n_{1}, \ldots, n_{t})$ for $D$. Such a loaded simplex
will be denoted by $D(\vec{\omega})$ or $[a_{j}(m_{j});b_{i}(n_{i})]_{s,t}$ as the need arises.

\item[(d)] A \textit{normalized} $(s,t)$-\textit{simplex} in $X$ is a loaded $(s,t)$-simplex $D(\vec{\omega})$ in $X$ whose
load vector $\vec{\omega}=(m_{1}, \ldots , m_{s}, n_{1}, \ldots , n_{t})$ satisfies the two normalizations:
\[
m_{1} + \cdots + m_{s} = 1 = n_{1} + \cdots n_{t}.
\]
Such a vector $\vec{\omega}$ will be called a \textit{normalized load vector} for $D$.
\end{enumerate}
\end{defn}
\noindent Rather than giving the original definition of generalized roundness $p$ from \cite{E}, we shall
present an equivalent reformulation in Definition \ref{GRUNT} (a) that is due to Lennard et al.\
\cite{LTW} and Weston \cite{W}.

\begin{defn}\label{GRUNT}
Let $p \geq 0$ and let $(X,d)$ be a metric space. Then:
\begin{enumerate}
\item[(a)] $(X,d)$ has \textit{generalized roundness} $p$ if and only if
for all $s,t \in \mathbb{N}$ and all normalized $(s,t)$-simplices $D(\vec{\omega})
= [a_{j}(m_{j});b_{i}(n_{i})]_{s,t}$
in $X$ we have:
\begin{eqnarray}\label{TWO}
& & \sum\limits_{1 \leq j_{1} < j_{2} \leq s} m_{j_{1}}m_{j_{2}}d(a_{j_{1}},a_{j_{2}})^{p} +
\sum\limits_{1 \leq i_{1} < i_{2} \leq t} n_{i_{1}}n_{i_{2}}d(b_{i_{1}},b_{i_{2}})^{p} \nonumber \\
& \leq & \sum\limits_{j,i=1}^{s,t} m_{j}n_{i}d(a_{j},b_{i})^{p}.
\end{eqnarray}

\item[(b)] $(X,d)$ has \textit{strict generalized roundness} $p$ if and only if it has generalized
roundness $p$ and the associated inequalities (\ref{TWO}) are all strict.
\end{enumerate}
\end{defn}
\noindent Two key aspects of generalized roundness for the purposes of this paper are the following equivalences.

\begin{thm}[\cite{LTW}, \cite{DW}]\label{REMGR} Let $p \geq 0$ and let $(X,d)$ be a metric space. Then:
\begin{enumerate}
\item[(a)] $(X,d)$ has $p$-negative type if and only if
it has generalized roundness $p$.

\item[(b)] $(X,d)$ has strict $p$-negative type if and only if
it has strict generalized roundness $p$.
\end{enumerate}
\end{thm}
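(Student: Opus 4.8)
The plan is to build an explicit dictionary between the data appearing in the negative-type inequality (\ref{ONE}) and the data appearing in the generalized-roundness inequality (\ref{TWO}), under which the two inequalities become positive scalar multiples of one another. In one direction, given a normalized $(s,t)$-simplex $[a_{j}(m_{j});b_{i}(n_{i})]_{s,t}$, I would form the point set $\{a_{1},\dots,a_{s},b_{1},\dots,b_{t}\}$ --- which has $s+t$ \emph{distinct} elements by the definition of a simplex --- and assign weights $\eta_{a_{j}} = m_{j}$ and $\eta_{b_{i}} = -n_{i}$. The normalizations $m_{1}+\cdots+m_{s} = 1 = n_{1}+\cdots+n_{t}$ give $\sum \eta = 0$, so this is an admissible vector for (\ref{ONE}). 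In the other direction, given distinct points $x_{1},\dots,x_{k}$ and reals $\eta_{1},\dots,\eta_{k}$ summing to $0$ and not all zero, I would discard the indices with $\eta_{i}=0$ (they contribute nothing to (\ref{ONE})) and split the remainder into the positive part, relabelled $a_{1},\dots,a_{s}$ with $m_{j} = \eta_{a_{j}} > 0$, and the negative part, relabelled $b_{1},\dots,b_{t}$ with $n_{i} = -\eta_{b_{i}} > 0$; dividing through by the common mass $M = \sum_{j} m_{j} = \sum_{i} n_{i} > 0$ produces a normalized $(s,t)$-simplex.

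With this dictionary in place, the engine of the whole argument is the bookkeeping identity
\begin{align*}
\sum_{1 \leq i,j \leq k} d(x_{i},x_{j})^{p}\,\eta_{i}\eta_{j}
&= 2\Big( \sum_{1 \leq j_{1} < j_{2} \leq s} m_{j_{1}} m_{j_{2}}\, d(a_{j_{1}},a_{j_{2}})^{p} + \sum_{1 \leq i_{1} < i_{2} \leq t} n_{i_{1}} n_{i_{2}}\, d(b_{i_{1}},b_{i_{2}})^{p} \\
&\qquad\qquad - \sum_{j,i=1}^{s,t} m_{j} n_{i}\, d(a_{j},b_{i})^{p} \Big),
\end{align*}
which one checks by expanding the left side: the diagonal terms $d(x_{i},x_{i})^{p} = 0$ vanish, the $a$--$a$ and $b$--$b$ terms each double up, and the $a$--$b$ cross terms acquire the minus sign from $\eta_{b_{i}} = -n_{i}$. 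In the ``simplex $\to$ weights'' direction both sides are read off directly; in the ``weights $\to$ simplex'' direction the bracketed quantity is scaled by the positive factor $M^{2}$, which is irrelevant to its sign. Since the bracket is exactly (left side of (\ref{TWO})) $-$ (right side of (\ref{TWO})), inequality (\ref{ONE}) holds for a given choice of data if and only if (\ref{TWO}) holds for the corresponding simplex. As both maps are surjective onto the relevant data, part (a) follows.

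For part (b) the same identity does all the work, provided the dictionary matches \emph{nontrivial} data on the two sides exactly, and this edge-case bookkeeping is the step I expect to require the most care. I must verify two matchings. First, every nonzero admissible $\eta$ yields a genuine $(s,t)$-simplex with $s,t \geq 1$: a vector with $\sum \eta = 0$ and no negative entries must vanish, so nontriviality forces both the positive and negative parts to be nonempty, giving $s,t \in \mathbb{N}$ as required; and deleting the zero-weight points is harmless because it does not change the value of the sum in (\ref{ONE}). Second, every normalized simplex yields a nonzero $\eta$, since a normalized load vector has all $m_{j}, n_{i} > 0$. Granting these, strictness transfers cleanly through the identity: strict (\ref{TWO}) for every simplex forces strict (\ref{ONE}) for every nonzero $\eta$, and conversely. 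Finally one notes that neither the relabelling nor the positive scaling by $M^{2}$ can convert a strict inequality into an equality, because $M > 0$; this completes part (b).
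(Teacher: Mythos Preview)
Your argument is correct, and the dictionary you set up is exactly the standard one: the identity
\[
\sum_{i,j} d(x_i,x_j)^{p}\eta_i\eta_j \;=\; 2\bigl(L_D(p)-R_D(p)\bigr)
\]
(with $L_D$ and $R_D$ the two sides of (\ref{TWO})) is the whole content of the equivalence, and your edge-case analysis for part (b) --- that a nonzero $\eta$ with $\sum\eta=0$ must have both signs present, so that $s,t\geq 1$ after pruning zeros --- is the right thing to check.

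Note, however, that the paper does \emph{not} supply a proof of Theorem~\ref{REMGR}; it is quoted as a known result from \cite{LTW} (for part (a)) and \cite{DW} (for part (b)). So there is no ``paper's own proof'' to compare against. What you have written is essentially the argument one finds in those references, and it is complete as stated.
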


\noindent Based on Definition \ref{GRUNT} (a) and Theorem \ref{REMGR} we introduce two numerical
parameters $\gamma_{D}^{p}(\vec{\omega})$
and $\Gamma_{X}^{p}$ that are designed to quantify the \textit{degree of strictness}
of the non-trivial $p$-negative type inequalities.

\begin{defn}\label{SGAP} Let $p \geq 0$ and let $(X,d)$ be a metric space. Let $s,t$ be natural numbers and
$D=[a_{j};b_{i}]_{s,t}$ be an $(s,t)$-simplex in $X$. Denote by $N_{s,t}$
the set of all normalized load vectors $\vec{\omega}=
(m_{1}, \ldots , m_{s}, n_{1}, \ldots , n_{t}) \subset \mathbb{R}^{s+t}_{+}$ for $D$. Then
the \textit{(normalized)} $p$-\textit{negative type simplex gap} of $D$ is defined to be the
function $\gamma_{D}^{p} : N_{s,t} \rightarrow \mathbb{R}$ where
\begin{eqnarray*}
\gamma_{D}^{p}(\vec{\omega}) & = & \sum\limits_{j,i = 1}^{s,t} m_{j}n_{i}d(a_{j},b_{i})^{p}
- \sum\limits_{1 \leq j_{1} < j_{2} \leq s} m_{j_{1}}m_{j_{2}}d(a_{j_{1}},a_{j_{2}})^{p} \\
& ~ & - \sum\limits_{1 \leq i_{1} < i_{2} \leq t} n_{i_{1}}n_{i_{2}}d(b_{i_{1}},b_{i_{2}})^{p}
\end{eqnarray*}
for each $\vec{\omega}=(m_{1}, \ldots, m_{s}, n_{1}, \ldots, n_{t}) \in N_{s,t}$.
\end{defn}
\noindent Notice that $\gamma_{D}^{p}(\vec{\omega})$ takes the difference between the right side
and the left side of the inequality (\ref{TWO}). So, by Theorem \ref{REMGR}, $(X,d)$ has
strict $p$-negative type if and only if $\gamma_{D}^{p}(\vec{\omega}) > 0$ for each normalized
$(s,t)$-simplex $D(\vec{\omega})$ in $X$.

\begin{defn}\label{NGAP} Let $p \geq 0$.
Let $(X,d)$ be a metric space with $p$-negative type. We define the
\textit{(normalized)} $p$-\textit{negative type gap} of $(X,d)$
to be the non-negative quantity $$\Gamma_{X}^{p} = \inf\limits_{D(\vec{\omega})} \gamma_{D}^{p}(\vec{\omega})$$
where the infimum is taken over all normalized $(s,t)$-simplices $D(\vec{\omega})$ in $X$.
\end{defn}

\noindent Recall that a \textit{finite metric tree} is a finite connected graph that has no cycles,
endowed with an edge weighted path metric.
Hjorth et al.\ \cite{HLM} have shown that
finite metric trees have strict $1$-negative type. Therefore it makes sense to try to
compute the $1$-negative type gap of any given finite metric tree. Indeed, a
very succinct formula was derived in \cite[Corollary 4.13]{DW}.
However, a modicum of additional notation is necessary before
stating this result. The set of all edges in a metric tree $(T,d)$,
considered as unordered pairs, will be denoted $E(T)$, and the metric length $d(x,y)$ of any given
edge $e=(x,y) \in E(T)$ will be denoted $|e|$.

\begin{thm}[Doust and Weston \cite{DW}]\label{treegap}
Let $(T,d)$ be a finite metric tree. Then
the (normalized) $1$-negative type gap $\Gamma = \Gamma_{T}^{1}$ of $(T,d)$ is given by the following formula:
\[
\Gamma = \Biggl\{ \sum\limits_{e \in E(T)} |e|^{-1} \Biggl\}^{-1}.
\]
In particular, $\Gamma > 0$.
\end{thm}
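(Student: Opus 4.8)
The plan is to first collapse the simplex gap $\gamma_D^1(\vec{\omega})$ into a single sum over the edges of $T$, and then to minimize the resulting expression by a Cauchy--Schwarz estimate whose equality case hands us an explicit optimal simplex. The essential input is that a tree metric is additive along paths: for vertices $x,y$, writing $\chi_e(x,y)=1$ when the edge $e$ lies on the unique $x$--$y$ path and $\chi_e(x,y)=0$ otherwise, we have $d(x,y)=\sum_{e\in E(T)}|e|\,\chi_e(x,y)$. Substituting this into the three sums of Definition \ref{SGAP} (with $p=1$) and interchanging the order of summation rewrites $\gamma_D^1(\vec{\omega})$ as $\sum_{e\in E(T)}|e|\,q_e$, where $q_e$ is a quadratic expression in the loads. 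Fixing an edge $e$ and deleting it splits $T$ into two components; letting $V_e$ be the vertex set of one of them and setting $\mu_e=\sum_{a_j\in V_e}m_j$ and $\nu_e=\sum_{b_i\in V_e}n_i$, the indicator $\chi_e$ picks out exactly the pairs of vertices lying in opposite components, so a short computation using $m_1+\cdots+m_s=1=n_1+\cdots+n_t$ gives
\[
q_e=\mu_e(1-\nu_e)+(1-\mu_e)\nu_e-\mu_e(1-\mu_e)-\nu_e(1-\nu_e)=(\mu_e-\nu_e)^2 .
\]
This yields the master formula $\gamma_D^1(\vec{\omega})=\sum_{e\in E(T)}|e|\,(\mu_e-\nu_e)^2$, which is manifestly independent of the side chosen for each $e$.

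With this formula the lower bound $\Gamma\ge\{\sum_e|e|^{-1}\}^{-1}$ follows by Cauchy--Schwarz. Writing $S=\sum_{e\in E(T)}|e|^{-1}$ and pairing $\sqrt{|e|}\,|\mu_e-\nu_e|$ against $|e|^{-1/2}$, one obtains
\[
\Big(\sum_{e\in E(T)}|\mu_e-\nu_e|\Big)^2\le\Big(\sum_{e\in E(T)}|e|(\mu_e-\nu_e)^2\Big)\Big(\sum_{e\in E(T)}|e|^{-1}\Big)=\gamma_D^1(\vec{\omega})\cdot S .
\]
It therefore suffices to check that $\sum_e|\mu_e-\nu_e|\ge1$ for every normalized simplex. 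Here I would recognize $\sum_e|\mu_e-\nu_e|$ as the Wasserstein--$1$ distance, computed with all edge lengths set to $1$, between the probability measures $\mu$ and $\nu$ that place mass $m_j$ at $a_j$ and $n_i$ at $b_i$ respectively. Because a simplex consists of pairwise distinct coordinates, these measures have disjoint supports, so any transport plan must move a total mass $1$ over graph-distances of at least $1$; hence $\sum_e|\mu_e-\nu_e|\ge1$, and consequently $\gamma_D^1(\vec{\omega})\ge S^{-1}$ for every normalized simplex, giving $\Gamma\ge S^{-1}$.

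For the matching upper bound I would read off the equality conditions: Cauchy--Schwarz is sharp precisely when $|e|\,|\mu_e-\nu_e|$ is constant, that is $|\mu_e-\nu_e|=S^{-1}|e|^{-1}$, together with $\sum_e|\mu_e-\nu_e|=1$. To realize both at once I would use the single simplex whose vertices are \emph{all} of $T$: root $T$ at an arbitrary vertex, place every odd-depth vertex among the $a_j$ and every even-depth vertex among the $b_i$, and assign the vertex $v$ the signed mass $\delta_v=(-1)^{\,\mathrm{depth}(v)+1}\big(w_{e_v}+\sum_{c}w_{e_c}\big)$, where $w_e=S^{-1}|e|^{-1}$, $e_v$ is the edge from $v$ to its parent, and $c$ ranges over the children of $v$ (the root having no $e_v$-term). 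Telescoping shows that each weight $w_e$ is counted exactly twice in $\sum_v|\delta_v|$, once for the vertex below $e$ and once for its parent, so $\sum_v|\delta_v|=2\sum_e w_e=2$; since $\sum_v\delta_v=0$ automatically, the positive and negative parts of $(\delta_v)$ each sum to $1$, which are exactly the two normalizations of a normalized load vector. A direct check then gives $\mu_e-\nu_e=\pm w_e$ for every $e$, whence $\gamma_D^1(\vec{\omega})=\sum_e|e|\,w_e^2=S^{-2}\sum_e|e|^{-1}=S^{-1}$. Combining the two bounds gives $\Gamma=S^{-1}$, and in particular $\Gamma>0$.

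The algebra producing the master formula and the Cauchy--Schwarz lower bound is routine; the step I expect to be the main obstacle is the upper bound. One must exhibit a \emph{single} normalized simplex attaining equality, and it is not a priori clear that the forced profile $|\mu_e-\nu_e|=S^{-1}|e|^{-1}$ can be realized by an honest pair of disjointly supported probability loads rather than merely approached in the infimum. The depth-parity construction resolves this, but the delicate point is verifying that its loads are non-negative with total mass exactly $1$ on each class (equivalently $\sum_v|\delta_v|=2$); the ``each edge counted twice'' telescoping is precisely what forces the normalizations to balance, and I would also confirm that $w_{e_v}>0$ keeps every $|\delta_v|$ strictly positive so that the vertex set genuinely splits into non-empty $a$- and $b$-classes.
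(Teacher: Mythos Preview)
The paper does not supply a proof of this theorem: it is quoted as \cite[Corollary 4.13]{DW} and stated without argument, so there is no in-paper proof to compare against. That said, your argument is correct and essentially reproduces the mechanism of the Doust--Weston proof. The edge-by-edge identity $\gamma_D^{1}(\vec{\omega})=\sum_{e\in E(T)}|e|\,(\mu_e-\nu_e)^2$ is exactly the structural fact that drives their computation, your Cauchy--Schwarz step is the natural way to extract the lower bound, and the depth-parity simplex with loads $|\delta_v|=w_{e_v}+\sum_{c}w_{e_c}$ genuinely attains equality: each $w_e$ is counted once at each endpoint so $\sum_v|\delta_v|=2$, the alternating signs force $\sum_v\delta_v=0$, and restricting the same telescoping cancellation to the subtree below an edge $e$ leaves only the uncancelled term $\pm w_e$, giving $|\mu_e-\nu_e|=w_e$.

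Two small remarks on presentation. First, in the lower bound you invoke the identification of $\sum_e|\mu_e-\nu_e|$ with the $W_1$-distance on the unit-edge tree; this is true but can be replaced by a one-line estimate that avoids optimal transport altogether: writing $\eta_v$ for the signed load at $v$ and $f(e)=\mu_e-\nu_e$ for the induced flow, the divergence relation gives $|\eta_v|\le\sum_{e\ni v}|f(e)|$, and summing over $v$ yields $2=\sum_v|\eta_v|\le 2\sum_e|f(e)|$, hence $\sum_e|\mu_e-\nu_e|\ge 1$. Second, your final paragraph worries about non-negativity and the normalizations balancing; you have in fact already verified both (the telescoping gives $\sum_v|\delta_v|=2$ and $\sum_v\delta_v=0$ simultaneously, and $w_{e_v}>0$ for every non-root vertex while the root has at least one child once $|T|\ge 2$), so that paragraph can be trimmed.
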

\noindent In the remaining sections of this paper we shall show how the notions, equivalences and
results of this section may be used to infer some basic properties of metrics of strict $p$-negative
metrics for general values of $p \geq 0$.

\section{A quantitative lower bound on supremal strict $p$-negative type}
The observation is made in \cite[Theorem 5.2]{DW} that if the $p$-negative type gap
$\Gamma_{X}^{p}$ of a finite metric space $(X,d)$ is positive for some $p \geq 0$, then $(X,d)$
must have strict $q$-negative type on some interval of the form $[p, p + \zeta)$ where $\zeta > 0$.
However, the authors only provide an explicit value for $\zeta$ in the case $p = 1$. Letting $n=|X|$,
the value of $\zeta$ given in this case is $O(1/n^{2})$. (See \cite[Theorem 5.1]{DW}.)
The purpose of the present section is to give a precise quantitative version of \cite[Theorem 5.2]{DW}
which yields significantly improved values of $\zeta$ for all $p \geq 0$.
In fact, for each $p \geq 0$, our value of $\zeta$ is $O(1)$.
The precise statement of this result is given in Theorem \ref{mainthm}.
As an application we obtain
significantly improved lower bounds on the maximal $p$-negative type
of finite metric trees. These are stated in Corollary \ref{treebound}. Then in Remark \ref{treerem}
we point out that the estimates given in Corollary \ref{treebound} are actually close to best possible
for finite metric trees that resemble stars. This suggests there is little room for improvement
in the statement of Theorem \ref{mainthm}, the main result of this section.

The proof of Theorem \ref{mainthm} is facilitated by the following two technical lemmas which
are easily realized using basic calculus or by simple combinatorial arguments. The proofs of
these lemmas are therefore omitted.

\begin{lem}\label{techlem}
Let $s \in \mathbb{N}$. If $s$ real variables $\ell_{1}, \ldots, \ell_{s} > 0$ are subject to the
constraint $\ell_{1} + \cdots + \ell_{s} = 1$, then the expression
$$ \sum\limits_{k_{1} < k_{2}} \ell_{k_{1}} \ell_{k_{2}} $$
has maximum value $\frac{s(s-1)}{2} \cdot \frac{1}{s^{2}} = \frac{1}{2}(1 - \frac{1}{s})$ which is
attained when $\ell_{1} = \cdots = \ell_{s} = \frac{1}{s}$.
\end{lem}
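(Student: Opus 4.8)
The plan is to reduce the maximization to an equivalent minimization via the elementary algebraic identity relating the square of a sum to the sum of squares. First I would observe that
\[
\Bigl( \sum_{k=1}^{s} \ell_{k} \Bigr)^{2} = \sum_{k=1}^{s} \ell_{k}^{2} + 2 \sum_{k_{1} < k_{2}} \ell_{k_{1}} \ell_{k_{2}},
\]
and then invoke the constraint $\ell_{1} + \cdots + \ell_{s} = 1$ to rewrite the left-hand side as $1$. Solving for the cross-term yields the clean reformulation
\[
\sum_{k_{1} < k_{2}} \ell_{k_{1}} \ell_{k_{2}} = \frac{1}{2} \Bigl( 1 - \sum_{k=1}^{s} \ell_{k}^{2} \Bigr).
\]
This identity shows that maximizing the symmetric product sum is exactly the same problem as minimizing the sum of squares $\sum_{k} \ell_{k}^{2}$, subject to the same linear constraint.

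The crux of the argument is therefore the lower bound on $\sum_{k} \ell_{k}^{2}$, which I would obtain from the Cauchy--Schwarz inequality applied to the vectors $(\ell_{1}, \ldots, \ell_{s})$ and $(1, \ldots, 1)$:
\[
1 = \Bigl( \sum_{k=1}^{s} \ell_{k} \cdot 1 \Bigr)^{2} \leq \Bigl( \sum_{k=1}^{s} \ell_{k}^{2} \Bigr) \Bigl( \sum_{k=1}^{s} 1 \Bigr) = s \sum_{k=1}^{s} \ell_{k}^{2},
\]
so that $\sum_{k} \ell_{k}^{2} \geq 1/s$. Substituting this back into the reformulated identity gives the upper bound $\frac{1}{2}(1 - \frac{1}{s})$ immediately. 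The equality case of Cauchy--Schwarz then pins down the maximizer: equality holds precisely when $(\ell_{1}, \ldots, \ell_{s})$ is a scalar multiple of $(1, \ldots, 1)$, and combined with $\sum_{k} \ell_{k} = 1$ this forces $\ell_{1} = \cdots = \ell_{s} = 1/s$, at which point the stated value $\frac{s(s-1)}{2} \cdot \frac{1}{s^{2}}$ is attained by direct count of the $\binom{s}{2}$ equal terms.

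There is no genuine obstacle here; this is a standard constrained optimization whose only subtlety is bookkeeping the equality conditions. If one preferred to avoid Cauchy--Schwarz, an equally short alternative would be to note that $\sum_{k} \ell_{k}^{2} - \frac{1}{s} = \sum_{k} (\ell_{k} - \frac{1}{s})^{2} \geq 0$, which expands using $\sum_{k} \ell_{k} = 1$ and makes the equality case manifest. Either route closes the argument in a few lines, which is why the authors reasonably omit it.
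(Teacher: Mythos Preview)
Your argument is correct and complete; the paper itself omits the proof entirely, remarking only that it is ``easily realized using basic calculus or by simple combinatorial arguments.'' Your use of the identity $\sum_{k_{1}<k_{2}}\ell_{k_{1}}\ell_{k_{2}} = \frac{1}{2}\bigl(1-\sum_{k}\ell_{k}^{2}\bigr)$ together with Cauchy--Schwarz (or the variance-style rewriting) is exactly the sort of short combinatorial justification the authors have in mind.
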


\begin{lem}\label{zerogap}
Let $s,t \in \mathbb{N}$ and let $m=s+t$. Then
$$ \frac{1}{2}\left(1 - \frac{1}{s}\right) + \frac{1}{2}\left(1 - \frac{1}{t}\right) \leq
1 - \frac{1}{2} \left( \frac{1}{\lfloor \frac{m}{2} \rfloor} +
\frac{1}{\lceil \frac{m}{2} \rceil} \right).$$
Moreover, the function $\gamma(m) = 1 -
\frac{1}{2} \left( \frac{1}{\lfloor \frac{m}{2} \rfloor} +
\frac{1}{\lceil \frac{m}{2} \rceil} \right)$ increases strictly as $m$ increases.
\end{lem}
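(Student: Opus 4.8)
The plan is to reduce both assertions to the single elementary fact that, among all ways of writing $m = s + t$ with positive integers $s,t$, the sum $\tfrac{1}{s} + \tfrac{1}{t}$ is minimized by the most balanced splitting $\{s,t\} = \{\lfloor m/2\rfloor, \lceil m/2\rceil\}$. First I would rewrite the left-hand side of the displayed inequality as $1 - \tfrac{1}{2}(\tfrac{1}{s} + \tfrac{1}{t})$ and the right-hand side as $1 - \tfrac{1}{2}(\tfrac{1}{\lfloor m/2\rfloor} + \tfrac{1}{\lceil m/2\rceil})$, so that the stated inequality is equivalent to
\[
\frac{1}{\lfloor m/2\rfloor} + \frac{1}{\lceil m/2\rceil} \;\leq\; \frac{1}{s} + \frac{1}{t}.
\]
Thus the entire content of the first assertion is that the balanced splitting minimizes $\tfrac{1}{s}+\tfrac{1}{t}$ subject to $s+t=m$.

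To prove this minimization I would consider the real function $g(x) = \tfrac{1}{x} + \tfrac{1}{m-x}$ on the open interval $(0,m)$. A direct computation gives $g''(x) = 2x^{-3} + 2(m-x)^{-3} > 0$, so $g$ is strictly convex, and since $g(x) = g(m-x)$ it is symmetric about $x = m/2$, where it attains its unique minimum. Strict convexity then forces $g$ to be strictly decreasing on $(0, m/2]$ and strictly increasing on $[m/2, m)$; hence over the integer arguments $s \in \{1,\dots,m-1\}$ the minimum is attained precisely at the integer(s) nearest to $m/2$, namely $s = \lfloor m/2 \rfloor$ and $s = \lceil m/2\rceil$. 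Evaluating $g$ there yields the displayed inequality, with equality exactly when the splitting $\{s,t\}$ is already balanced.

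For the monotonicity of $\gamma(m) = 1 - \tfrac{1}{2}\,h(m)$, where $h(m) = \tfrac{1}{\lfloor m/2\rfloor} + \tfrac{1}{\lceil m/2\rceil}$, it suffices to show that $h$ is strictly decreasing in $m$. I would argue this by comparing $m$ with $m+1$: since $\lfloor m/2\rfloor + \lceil m/2\rceil = m$ for every $m$, passing from $m$ to $m+1$ increases exactly one of the two quantities $\lfloor m/2\rfloor, \lceil m/2\rceil$ by $1$ while leaving the other unchanged (the ceiling rises when $m$ is even, the floor rises when $m$ is odd). Replacing a single denominator by its successor strictly decreases the corresponding reciprocal and leaves the other fixed, so $h(m+1) < h(m)$, whence $\gamma(m+1) > \gamma(m)$.

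I expect the only mild obstacle to be the bookkeeping with the floor and ceiling functions: one must verify that the integer minimizer of a strictly convex symmetric function really is the integer nearest its axis of symmetry, and one must keep the two parities of $m$ straight in the monotonicity step. Both points are routine once the strict convexity of $g$ is in hand, which is precisely why the lemma can be stated as elementary and its proof deferred.
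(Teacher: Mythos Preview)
Your argument is correct. The paper itself omits the proof of this lemma, remarking only that it is ``easily realized using basic calculus or by simple combinatorial arguments,'' so there is no detailed proof to compare against; your approach via the strict convexity and symmetry of $g(x)=\tfrac{1}{x}+\tfrac{1}{m-x}$ and the parity bookkeeping for $h(m)$ is exactly the kind of elementary argument the authors have in mind.
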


\noindent We will continue to use the notation $\gamma(m) = 1 - \frac{1}{2} \cdot
\bigl( {\lfloor \frac{m}{2} \rfloor}^{-1} + {\lceil \frac{m}{2} \rceil}^{-1} \bigl)$
introduced in the preceding lemma throughout the remainder of this section as it
allows the efficient statement and succinct proof of certain key formulas such as Theorem \ref{mainthm}.

The following basic notions are also relevant to the proof of Theorem \ref{mainthm}.
Let $(X,d)$ be a metric space.
If $d(x,y) = 1$ for all $x \not= y$, then $d$ is called the \textit{discrete metric} on $X$.
The \textit{metric diameter} of $(X,d)$ is given by the quantity
$\text{diam}\, X = \sup \{ d(x,y) | x,y \in X \}$. Provided $|X| < \infty$, 
the \textit{scaled metric diameter} of $(X,d)$ is given by the ratio
$\mathfrak{D}_{X} = (\text{diam}\, X) / \min \{d(x,y) | x \not= y\}$.

\begin{thm}\label{mainthm}
Let $(X,d)$ be a finite metric space with cardinality $n = |X| \geq 3$ and let $p \geq 0$.
If the $p$-negative type gap $\Gamma_{X}^{p}$ of $(X,d)$ is positive, then
$(X,d)$ has $q$-negative type for all $q \in [p, p + \zeta]$ where
\begin{eqnarray*}
\zeta & = & \frac{\ln \biggl( 1 + \frac{\Gamma_{X}^{p}}{ ({\rm{diam}}\, X)^{p}
\cdot  \gamma(n) } \biggl)}{\ln \mathfrak{D}_{X}}.
\end{eqnarray*}
Moreover, $(X,d)$ has strict $q$-negative type for all $q \in [p, p + \zeta)$. In particular,
$p + \zeta$ provides a lower bound on the supremal (strict) $q$-negative type of $(X,d)$.
\end{thm}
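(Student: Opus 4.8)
The plan is to fix $q = p + \theta$ with $\theta = q - p \ge 0$ and to compare the simplex gap $\gamma_{D}^{q}(\vec{\omega})$ at the shifted exponent with the gap $\gamma_{D}^{p}(\vec{\omega})$ at the base exponent, uniformly over all normalized $(s,t)$-simplices $D(\vec{\omega})$ in $X$. By Theorem \ref{REMGR} it suffices to show $\gamma_{D}^{q}(\vec{\omega}) \ge 0$ for every such simplex when $\theta \in [0,\zeta]$, and $\gamma_{D}^{q}(\vec{\omega}) > 0$ when $\theta \in [0,\zeta)$. Write $\Delta = \mathrm{diam}\, X$ and $\delta = \min\{d(x,y) : x \ne y\}$, so that $\mathfrak{D}_{X} = \Delta/\delta$. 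I would assume $\mathfrak{D}_{X} > 1$ (implicit in the statement, since $\zeta$ involves $\ln \mathfrak{D}_{X}$); the degenerate case $\mathfrak{D}_{X} = 1$ of equal distances is trivial, as then $\gamma_{D}^{q}(\vec{\omega})$ is manifestly positive for every $q$.

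First I would split $\gamma_{D}^{q}(\vec{\omega})$ into its positive ``cross'' sum $C_{q} = \sum_{j,i} m_{j}n_{i}d(a_{j},b_{i})^{q}$ and its negative ``same-class'' sum $S_{q} = \sum_{j_{1}<j_{2}} m_{j_{1}}m_{j_{2}}d(a_{j_{1}},a_{j_{2}})^{q} + \sum_{i_{1}<i_{2}} n_{i_{1}}n_{i_{2}}d(b_{i_{1}},b_{i_{2}})^{q}$. Factoring $d^{q} = d^{p}\,d^{\theta}$ and using $\delta \le d \le \Delta$ on the pairwise distances yields the two one-sided estimates $C_{q} \ge \delta^{\theta}C_{p}$ and $S_{q} \le \Delta^{\theta}S_{p}$. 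Hence
\[
\gamma_{D}^{q}(\vec{\omega}) \;\ge\; \delta^{\theta}C_{p} - \Delta^{\theta}S_{p} \;=\; \delta^{\theta}\,\gamma_{D}^{p}(\vec{\omega}) - (\Delta^{\theta} - \delta^{\theta})\,S_{p},
\]
and I would then invoke Definition \ref{NGAP} to replace $\gamma_{D}^{p}(\vec{\omega})$ by the lower bound $\Gamma_{X}^{p}$.

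The remaining task is a clean, simplex-independent upper bound on $S_{p}$. Bounding each same-class distance by $\Delta$ gives $S_{p} \le \Delta^{p}\bigl(\sum_{j_{1}<j_{2}} m_{j_{1}}m_{j_{2}} + \sum_{i_{1}<i_{2}} n_{i_{1}}n_{i_{2}}\bigr)$; Lemma \ref{techlem} bounds the two weight sums by $\tfrac{1}{2}(1-\tfrac{1}{s})$ and $\tfrac{1}{2}(1-\tfrac{1}{t})$, and Lemma \ref{zerogap} collapses their sum into $\gamma(m)$ with $m = s+t$. Since the $s+t$ vertices of $D$ are distinct points of $X$ we have $m \le n$, so strict monotonicity of $\gamma$ gives $S_{p} \le \Delta^{p}\gamma(n)$. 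Substituting and dividing by $\delta^{\theta} > 0$ produces
\[
\delta^{-\theta}\,\gamma_{D}^{q}(\vec{\omega}) \;\ge\; \Gamma_{X}^{p} - (\mathfrak{D}_{X}^{\theta} - 1)\,\Delta^{p}\gamma(n),
\]
whose right side is $\ge 0$ exactly when $\mathfrak{D}_{X}^{\theta} \le 1 + \Gamma_{X}^{p}/(\Delta^{p}\gamma(n))$, i.e. precisely when $\theta \le \zeta$. The main obstacle is less a genuine difficulty than a point requiring care: funneling Lemmas \ref{techlem} and \ref{zerogap} into the single universal constant $\gamma(n)$, valid for every simplex regardless of its $s,t$. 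The strict case is then immediate, since $\theta < \zeta$ forces the bracket strictly positive, whence $\gamma_{D}^{q}(\vec{\omega}) > 0$ for every normalized simplex and strict $q$-negative type follows from Theorem \ref{REMGR}.
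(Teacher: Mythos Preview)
Your proof is correct and follows essentially the same route as the paper's: bound the ``same-class'' sum via Lemmas \ref{techlem} and \ref{zerogap} to get the universal constant $\gamma(n)$, and compare the simplex gap at exponent $p+\theta$ against $\Gamma_{X}^{p}$. The only cosmetic difference is that the paper first rescales so that $\delta=1$ and then bounds $L(p+\theta)-L(p)$ additively using the monotonicity of $x\mapsto x^{p+\theta}-x^{p}$ on $[1,\infty)$, whereas you work with general $\delta$ and $\Delta$ and factor $d^{q}=d^{p}d^{\theta}$ multiplicatively; both arrive at the identical final inequality $\gamma_{D}^{q}(\vec{\omega})\geq \Gamma_{X}^{p}-(\mathfrak{D}_{X}^{\theta}-1)\Delta^{p}\gamma(n)$.
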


\begin{proof} For notational ease we set $\Gamma = \Gamma_{X}^{p}$
and $\mathfrak{D} = \mathfrak{D}_{X}$ throughout this proof.
We may assume that the metric $d$ is not a positive multiple of the discrete metric on $X$. Otherwise, $(X,d)$ would
have strict $q$-negative type for all $q \geq 0$. Hence $\mathfrak{D} > 1$.

Since scaling the metric by a positive constant has no effect on whether the space has $p$-negative type,
we may assume that $\min \{d(x,y) | x \not= y\} = 1$.
This means that $\mathfrak{D}$ is now the diameter of our rescaled metric space (which we will continue
to denote by $(X,d)$). Moreover, for all $\ell = d(x,y) \not= 0$ and all $\zeta > 0$, we have
$\ell^{p+\zeta} - \ell^{p} \leq \mathfrak{D}^{p +\zeta} - \mathfrak{D}^{p}$.
This is because, for any fixed $\zeta > 0$, the function $f(x) = x^{p +\zeta} - x^{p}$
is increasing on the interval $[1, \infty)$. This inequality will be used in the derivation of (\ref{TWELVE}) below.

Consider an arbitrary normalized $(s,t)$-simplex $D = [a_{j}(m_{j});b_{i}(n_{i})]_{s,t}$ in $X$. Necessarily,
$m = s+t \leq n$. For any given $r \geq 0$, let
\begin{eqnarray*}
L (r) & = & \sum\limits_{j_{1}<j_{2}} m_{j_{1}}m_{j_{2}}d(a_{j_{i}},a_{j_{2}})^{r}
+ \sum\limits_{i_{1}<i_{2}} n_{i_{1}}n_{i_{2}}d(b_{i_{1}},b_{i_{2}})^{r},\,\,{\rm{and}} \\
R (r) & = & \sum\limits_{j,i} m_{j}n_{i}d(a_{j},b_{i})^{r}.
\end{eqnarray*}
By definition of the $p$-negative type gap $\Gamma$ we have
\begin{eqnarray}\label{TEN}
L(p) + \Gamma \leq R(p).
\end{eqnarray}
The strategy of the proof is to argue that
\begin{eqnarray}\label{ELEVEN}
L(p + \zeta) < L(p) + \Gamma
& \text{ and } &
R(p) \leq R(p + \zeta)
\end{eqnarray}
provided $\zeta > 0$ is sufficiently small. If so, then $L(p + \zeta) < R(p + \zeta)$
by (\ref{TEN}) and (\ref{ELEVEN}). In other words, $(X,d)$ has strict $(p+\zeta)$-negative type
under these circumstances.
Now, as all non-zero distances in $(X,d)$ are at least one, we automatically obtain the second inequality
of (\ref{ELEVEN}) for all $\zeta > 0$.
Therefore we only need to concentrate on the first inequality of (\ref{ELEVEN}). First of all, notice that
\begin{eqnarray}\label{TWELVE}
L (p + \zeta) - L (p) & = & \sum\limits_{j_{1}<j_{2}} m_{j_{1}}m_{j_{2}}
\bigl( d(a_{j_{1}},a_{j_{2}})^{p + \zeta} - d(a_{j_{1}},a_{j_{2}})^{p} \bigl) \nonumber \\
& ~ & ~ \nonumber \\
& ~ & + \sum\limits_{i_{1}<i_{2}} n_{i_{1}}n_{i_{2}}
\bigl( d(b_{i_{1}},b_{i_{2}})^{p + \zeta} - d(b_{i_{1}},b_{i_{2}})^{p} \bigl) \nonumber \\
& \leq & \Biggl(\sum\limits_{j_{1}<j_{2}} m_{j_{1}}m_{j_{2}}
+ \sum\limits_{i_{1}<i_{2}} n_{i_{1}}n_{i_{2}} \Biggl)
\cdot \bigl( \mathfrak{D}^{p + \zeta} - \mathfrak{D}^{p} \bigl) \nonumber \\
& \leq & \Biggl( 1 - \frac{1}{2} \left( \frac{1}{s} + \frac{1}{t} \right) \Biggl)
\cdot \bigl( \mathfrak{D}^{p + \zeta} - \mathfrak{D}^{p} \bigl) \nonumber \\
& \leq & \Biggl( 1 - \frac{1}{2} \left( \frac{1}{\lfloor \frac{m}{2} \rfloor}
+ \frac{1}{\lceil \frac{m}{2} \rceil} \right) \Biggl)
\cdot \bigl( \mathfrak{D}^{p + \zeta} - \mathfrak{D}^{p} \bigl) \nonumber \\
& = & \gamma(m) \cdot \bigl( \mathfrak{D}^{p + \zeta} - \mathfrak{D}^{p} \bigl) \nonumber \\
& \leq & \gamma(n)
\cdot \bigl( \mathfrak{D}^{p + \zeta} - \mathfrak{D}^{p} \bigl),
\end{eqnarray}
by applying Lemmas \ref{techlem} and \ref{zerogap}.
Now observe that:
\begin{eqnarray}\label{THIRTEEN}
\gamma(n) \cdot \bigl( \mathfrak{D}^{p + \zeta} - \mathfrak{D}^{p} \bigl) \leq \Gamma
& \text{iff} & \zeta \leq \frac{\ln \biggl( 1 + \frac{\Gamma}{ \mathfrak{D}^{p} \cdot
\gamma(n)} \biggl)}{\ln \mathfrak{D}}.
\end{eqnarray}
By combining (\ref{TWELVE}) and (\ref{THIRTEEN}), we obtain the first inequality of (\ref{ELEVEN})
for all $\zeta > 0$ such that $$\zeta < \zeta_{0} =
\frac{\ln \biggl( 1 + \frac{\Gamma}{ \mathfrak{D}^{p} \cdot \gamma(n)} \biggl)}{\ln \mathfrak{D}}.$$
Hence $L(p + \zeta) < R(p + \zeta)$ for any such $\zeta$.
It is also clear from (\ref{ELEVEN}), (\ref{TWELVE}) and (\ref{THIRTEEN}) that $L(\zeta_{0}) \leq R(\zeta_{0})$.
These observations and descaling the metric (if necessary) complete the proof of the theorem.
\end{proof}

\noindent Recall that the \textit{ordinary path metric} on a finite tree $T$ assigns length one to
each edge in the tree (with all other distances determined geodesically). With this in mind, we see
that Theorem \ref{mainthm} provides a significant improvement of the estimate given in \cite[Corollary 5.5]{DW}.

\begin{cor}\label{treebound}
Let $T$ be a finite tree on $n = |T| \geq 3$ vertices that is endowed with the ordinary path metric $d$.
Let $\mathfrak{D}$ denote the metric diameter of the resulting finite metric tree $(T,d)$.
Let $\wp_{T}$ denote the maximal $p$-negative type of $(T,d)$. Then:
\begin{eqnarray}\label{bound}
\wp_{T} & \geq & 1 + 
\biggl\{ {\ln \biggl( 1 + \frac{1}{\mathfrak{D} \cdot (n-1) \cdot \gamma(n)} \biggl)}\Bigl/{\ln \mathfrak{D}} \biggl\}.
\end{eqnarray}
\end{cor}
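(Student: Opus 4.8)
The plan is to obtain \eqref{bound} as a direct specialization of Theorem \ref{mainthm} to the case $p = 1$. Finite metric trees have strict $1$-negative type by Hjorth et al.\ \cite{HLM}, and, crucially for invoking Theorem \ref{mainthm}, their $1$-negative type gap is \emph{positive} by Theorem \ref{treegap}. Thus the standing hypothesis $\Gamma_{X}^{1} > 0$ of Theorem \ref{mainthm} is satisfied with $X = T$, and its conclusion yields strict $q$-negative type on $[1, 1+\zeta)$ for the explicit $\zeta$ recorded there. Since strict $q$-negative type entails ordinary $q$-negative type, the maximal $p$-negative type $\wp_{T}$ is at least $1 + \zeta$, and it will only remain to identify $\zeta$ with the bracketed expression in \eqref{bound}.

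First I would pin down the two data entering the formula for $\zeta$. Because $T$ carries the \emph{ordinary} path metric, every edge satisfies $|e| = 1$, and a tree on $n$ vertices has exactly $n-1$ edges; hence Theorem \ref{treegap} gives
\[
\Gamma_{T}^{1} = \Biggl\{ \sum_{e \in E(T)} |e|^{-1} \Biggr\}^{-1} = \frac{1}{n-1}.
\]
Next, since the smallest nonzero distance in the ordinary path metric is $1$, the scaled metric diameter coincides with the ordinary metric diameter, so $\mathfrak{D}_{T} = \mathfrak{D}$, and in particular $(\text{diam}\, T)^{p} = \mathfrak{D}^{1} = \mathfrak{D}$ when $p = 1$.

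Substituting $p = 1$, $\Gamma_{T}^{1} = 1/(n-1)$, and $(\text{diam}\, T)^{1} = \mathfrak{D}$ into the expression
\[
\zeta = \frac{\ln \bigl( 1 + \frac{\Gamma_{T}^{1}}{(\text{diam}\, T)^{1} \cdot \gamma(n)} \bigr)}{\ln \mathfrak{D}_{T}}
\]
furnished by Theorem \ref{mainthm} immediately produces $\zeta = \ln\bigl( 1 + \frac{1}{\mathfrak{D} \cdot (n-1) \cdot \gamma(n)} \bigr) \big/ \ln \mathfrak{D}$, whence $\wp_{T} \geq 1 + \zeta$ is exactly \eqref{bound}.

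The only point needing a word of care — and the closest thing to an obstacle — is that the formula for $\zeta$ divides by $\ln \mathfrak{D}$, so one must verify $\mathfrak{D} > 1$. This is automatic for $n \geq 3$: a tree on at least three vertices cannot have all pairwise distances equal to $1$, since that would force the underlying graph to be complete and therefore to contain a cycle. Consequently some pair of vertices lies at distance at least $2$, giving $\mathfrak{D} \geq 2 > 1$. Equivalently, the ordinary path metric on such a tree is never a positive multiple of the discrete metric, which is precisely the degenerate case already set aside at the outset of the proof of Theorem \ref{mainthm}.
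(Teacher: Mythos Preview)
Your proof is correct and follows exactly the same route as the paper: compute $\Gamma_{T}^{1} = 1/(n-1)$ from Theorem \ref{treegap} (using that the ordinary path metric gives $n-1$ edges of length $1$), then invoke Theorem \ref{mainthm} with $p=1$. The paper's proof is the two-line version of what you wrote; your added remarks identifying $\mathfrak{D}_{T}$ with $\mathfrak{D}$ and verifying $\mathfrak{D} > 1$ for $n \geq 3$ are correct and merely make explicit what the paper leaves implicit.
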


\begin{proof}
By Theorem \ref{treegap}, $\Gamma_{T}^{1} = \frac{1}{n-1}$. Now apply Theorem \ref{mainthm} with $p=1$.
\end{proof}

\begin{rem}\label{treerem}
The lower bound on $\wp_{T}$ given in the statement of Corollary \ref{treebound} is basically of
the correct order of magnitude when $\mathfrak{D} = 2$. To see this, first of all notice that
if $n > 2$ is even and $\mathfrak{D} = 2$, then (\ref{bound}) in Corollary \ref{treebound} simplifies to give:
$$\wp_{T} \geq 1 + \biggl\{ \ln \biggl( 1 + \frac{n}{2(n-1)(n-2)} \biggl)\Bigl/{\ln 2} \biggl\}.$$
However, if $T$ denotes a star with $n-1$ leaves (endowed with the ordinary path metric),
then \cite[Theorem 5.6]{DW} gives the exact value:
$$\wp_{T} = 1 + \biggl\{ {\ln \biggl( 1 + \frac{1}{n-2} \biggl)}\Bigl/{\ln 2} \biggl\}.$$
\end{rem}

\section{Supremal $p$-negative type of a finite metric space cannot be strict}
If the $p$-negative type gap $\Gamma_{X}^{p}$ of a metric space $(X,d)$ is positive then
$(X,d)$ clearly has strict $p$-negative type.
It is interesting to ask to what extent --- if any --- the converse of this statement is true.
Our next result points out that the converse statement is
always true in the case of finite metric spaces. By way of a notable contrast,
\cite[Theorem 5.7]{DW} shows that there exist infinite metric trees $(X,d)$ of strict
$1$-negative type with $1$-negative type gap $\Gamma_{X}^{1}=0$.

\begin{thm}\label{mainthm2}
Let $p \geq 0$ and let $(X,d)$ be a finite metric space. Then $(X,d)$ has strict $p$-negative type
if and only if $\Gamma_{X}^{p} > 0$.
\end{thm}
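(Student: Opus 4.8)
The plan is to prove the two directions separately. One direction is immediate: if $\Gamma_{X}^{p} > 0$, then by definition of the infimum $\gamma_{D}^{p}(\vec{\omega}) \geq \Gamma_{X}^{p} > 0$ for every normalized $(s,t)$-simplex $D(\vec{\omega})$ in $X$, so by the remark following Definition \ref{SGAP} (via Theorem \ref{REMGR}) the space $(X,d)$ has strict $p$-negative type. The substance of the theorem lies in the converse, so I would spend the bulk of the proof there.

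For the converse, suppose $(X,d)$ has strict $p$-negative type; I must show the infimum defining $\Gamma_{X}^{p}$ is strictly positive rather than merely nonnegative. The key structural observation is that, because $X$ is finite, there are only finitely many choices of the underlying combinatorial data: finitely many pairs $(s,t)$ with $s+t \leq n = |X|$, and for each such pair finitely many $(s,t)$-simplices $D = [a_{j};b_{i}]_{s,t}$ (these are just injective tuples of vertices of $X$). Thus $\Gamma_{X}^{p}$ is the minimum, over this finite collection of simplices $D$, of the quantity $\inf_{\vec{\omega} \in N_{s,t}} \gamma_{D}^{p}(\vec{\omega})$. Since a minimum of finitely many positive numbers is positive, it suffices to show that for each fixed simplex $D$ the inner infimum $\inf_{\vec{\omega} \in N_{s,t}} \gamma_{D}^{p}(\vec{\omega})$ is strictly positive.

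I would establish this inner claim by a compactness argument. For a fixed $(s,t)$-simplex $D$, the function $\gamma_{D}^{p}$ is a polynomial (hence continuous) in the load-vector coordinates $(m_{1},\ldots,m_{s},n_{1},\ldots,n_{t})$. The set $N_{s,t}$ of normalized load vectors is the product of two open simplices $\{m_{j} > 0, \sum m_{j} = 1\}$ and $\{n_{i} > 0, \sum n_{i} = 1\}$, whose closure $\overline{N_{s,t}}$ (obtained by relaxing the strict inequalities $m_{j} > 0$, $n_{i} > 0$ to $m_{j} \geq 0$, $n_{i} \geq 0$) is compact. The natural move is to extend $\gamma_{D}^{p}$ continuously to $\overline{N_{s,t}}$ and invoke that a continuous function on a compact set attains its minimum, then argue this minimum is positive.

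The main obstacle is exactly here: strict $p$-negative type only guarantees $\gamma_{D}^{p}(\vec{\omega}) > 0$ on the \emph{open} set $N_{s,t}$, whereas compactness forces me onto the \emph{closed} set $\overline{N_{s,t}}$, and on the boundary some weights vanish, so the extended function need not be positive. The resolution is to observe that when a weight vanishes the configuration degenerates into an $(s',t')$-simplex with smaller $s' + t'$ (deleting the zero-weight vertices), and after renormalizing this yields a genuine normalized simplex on a subset of $X$ for which strict $p$-negative type again applies. Concretely, I would extend $\gamma_{D}^{p}$ to $\overline{N_{s,t}}$, note that its boundary values reproduce $\gamma_{D'}^{p}$ values of lower-dimensional sub-simplices, and handle these by an induction on $m = s+t$ (the base cases being simplices with $s = t = 1$, where $\gamma_{D}^{p}(\vec{\omega}) = d(a_{1},b_{1})^{p} > 0$ trivially). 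Thus every value attained by the extended function on $\overline{N_{s,t}}$ is strictly positive, so its minimum over the compact set $\overline{N_{s,t}}$ is a strictly positive number; taking the minimum over the finitely many simplices $D$ yields $\Gamma_{X}^{p} > 0$. I would take care to state that points of $\overline{N_{s,t}}$ with \emph{all} the $m_{j}$ or \emph{all} the $n_{i}$ zero are excluded by the normalization $\sum m_{j} = \sum n_{i} = 1$, so no fully degenerate configurations arise.
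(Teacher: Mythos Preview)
Your proposal is correct and follows essentially the same compactness argument as the paper: reduce to finitely many simplices $D$, extend $\gamma_{D}^{p}$ continuously to the compact closure $\overline{N_{s,t}}$, and observe that the function is strictly positive everywhere on that closure, hence attains a positive minimum. The paper states the positivity on $\overline{N_{s,t}}$ in one sentence without justification; you are more explicit about why it holds on the boundary, which is good, but the induction on $s+t$ you propose is unnecessary: a boundary point with some zero weights is, after deleting those vertices, a normalized load vector lying in the \emph{interior} of $N_{s',t'}$ for a genuine $(s',t')$-simplex $D'$, so the hypothesis of strict $p$-negative type applies to it directly and gives $\gamma_{D}^{p}(\vec{\omega}) = \gamma_{D'}^{p}(\vec{\omega}') > 0$ without any inductive step.
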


\begin{proof} Let $p \geq 0$ be given.
We need only concern ourselves with the forward implication of the theorem since the converse is clear
from the definitions.

Assume that $(X,d)$ is a finite metric space with strict $p$-negative type. By Theorem \ref{REMGR},
$\gamma_{D}^{p}(\vec{\omega}) > 0$ for each normalized $(s,t)$-simplex $D(\vec{\omega}) \subseteq X$.
Referring back to Definitions \ref{CUTIE} and \ref{SGAP} we further note that we may assume that each
such $p$-negative type simplex gap $\gamma_{D}^{p}$ is defined on the compact set
$\overline{N_{s,t}} \subset \mathbb{R}^{s+t}$ and is positive at each point of $\overline{N_{s,t}}$.
Therefore
$$\min \biggl\{ \, \gamma_{D}^{p}(\vec{\omega}) \, | \, \vec{\omega} \in \overline{N_{s,t}} \, \biggl\} > 0$$
for each $(s,t)$-simplex $D$ in $X$.
But as $|X| < \infty$ the number of distinct $(s,t)$-simplexes $D$
that can be formed from $X$ must be finite. Thus the $p$-negative type gap $\Gamma_{X}^{p}$
is seen to be the minimum of finitely many positive quantities. As such we obtain
the desired result: $\Gamma_{X}^{p} > 0$.
\end{proof}

\begin{cor}\label{maincor}
Let $p \geq 0$ and let $(X,d)$ be a finite metric space. If $(X,d)$ has strict $p$-negative type,
then $(X,d)$ must have strict $q$-negative type for some interval of values $q \in [p,p+\zeta)$, $\zeta > 0$.
\end{cor}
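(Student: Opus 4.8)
The plan is to read this corollary directly off the two main results already in place, so the argument should be very short. The essential point is that Theorem \ref{mainthm2} translates the qualitative hypothesis ``strict $p$-negative type'' into the quantitative inequality $\Gamma_{X}^{p} > 0$, and Theorem \ref{mainthm} then converts a positive gap into an explicit interval $[p, p+\zeta)$ of strict $q$-negative type. Chaining these two implications constitutes essentially the whole proof.

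First I would isolate the degenerate situations in which the machinery of Theorem \ref{mainthm} does not literally apply. If the metric $d$ is a positive multiple of the discrete metric on $X$ --- which in particular happens whenever $|X| \leq 2$ --- then, as already observed in the proof of Theorem \ref{mainthm}, $(X,d)$ has strict $q$-negative type for every $q \geq 0$, and any $\zeta > 0$ witnesses the conclusion. Having dispatched this case, I may assume that $d$ is not a discrete multiple; this forces $n = |X| \geq 3$ and ensures that the scaled diameter satisfies $\mathfrak{D}_{X} > 1$, which are precisely the standing hypotheses of Theorem \ref{mainthm}.

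For the remaining generic case the argument runs as follows. Since $(X,d)$ has strict $p$-negative type, the forward implication of Theorem \ref{mainthm2} yields $\Gamma_{X}^{p} > 0$. Feeding this positivity into Theorem \ref{mainthm} produces strict $q$-negative type for all $q \in [p, p+\zeta)$, where
\[
\zeta = \frac{\ln\biggl(1 + \frac{\Gamma_{X}^{p}}{(\mathrm{diam}\, X)^{p} \cdot \gamma(n)}\biggr)}{\ln \mathfrak{D}_{X}}.
\]
Because $\Gamma_{X}^{p} > 0$ the argument of the logarithm in the numerator strictly exceeds $1$, so the numerator is positive, while the denominator $\ln \mathfrak{D}_{X}$ is positive since $\mathfrak{D}_{X} > 1$; hence $\zeta > 0$, exactly as the statement demands.

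I do not anticipate any genuine difficulty, since all of the substantive content has been absorbed into Theorems \ref{mainthm} and \ref{mainthm2}. The only point requiring a moment's care is the cardinality constraint $n \geq 3$ built into Theorem \ref{mainthm}: it is exactly this restriction that necessitates the preliminary reduction above, in which the one- and two-point spaces (equivalently, the discrete-multiple metrics) are handled separately before the main theorems are invoked.
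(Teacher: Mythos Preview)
Your proposal is correct and follows exactly the paper's approach: invoke Theorem~\ref{mainthm2} to convert strict $p$-negative type into $\Gamma_{X}^{p} > 0$, then feed this into Theorem~\ref{mainthm}. The paper's proof is the terse two-liner ``By Theorem~\ref{mainthm2}, $\Gamma = \Gamma_{X}^{p} > 0$. Now apply Theorem~\ref{mainthm}.''; your version is the same argument, just more scrupulous about the cardinality hypothesis $n \ge 3$ in Theorem~\ref{mainthm} and the case $\mathfrak{D}_{X}=1$, which the paper leaves implicit.
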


\begin{proof}
By Theorem \ref{mainthm2}, $\Gamma = \Gamma_{X}^{p} > 0$. Now apply Theorem \ref{mainthm}.
\end{proof}

\noindent As an immediate consequence of Corollary \ref{maincor} we obtain one of the main results
of this paper.

\begin{cor}\label{strict}
The supremal $p$-negative type of a finite metric space cannot be strict.
\end{cor}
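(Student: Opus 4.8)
The plan is to argue by contradiction, with essentially all of the analytic work already delegated to Corollary \ref{maincor}. Write $\wp$ for the supremal $p$-negative type of the finite metric space $(X,d)$, that is, $\wp = \sup\{\, p \geq 0 : (X,d) \text{ has } p\text{-negative type} \,\}$. The assertion to be proved is that $(X,d)$ does not possess strict $\wp$-negative type; this presupposes that $\wp$ is a finite real number, for if instead $(X,d)$ had $q$-negative type for every $q \geq 0$ --- as happens, for instance, when $d$ is a positive multiple of the discrete metric --- then $\wp = \infty$ and there is no finite supremal exponent at which strictness could be asserted, so the statement is vacuous. I would therefore focus on the case $\wp < \infty$.

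First I would suppose the contrary, namely that $(X,d)$ \emph{does} have strict $\wp$-negative type. Since strict $\wp$-negative type entails ordinary $\wp$-negative type, Corollary \ref{maincor} applies with $p = \wp$ and produces some $\zeta > 0$ for which $(X,d)$ has strict --- hence, a fortiori, ordinary --- $q$-negative type for every $q \in [\wp, \wp + \zeta)$. Choosing any $q$ with $\wp < q < \wp + \zeta$ then exhibits an exponent strictly exceeding $\wp$ at which $(X,d)$ still enjoys $q$-negative type, contradicting the definition of $\wp$ as the supremum of all such exponents. Hence $(X,d)$ cannot have strict $\wp$-negative type, which is the desired conclusion.

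I anticipate no serious obstacle, precisely because the substantive implication --- that strictness at a single exponent propagates to strictness across a whole half-open interval --- is exactly the content of Theorem \ref{mainthm}, which in turn rests on the positivity of the gap $\Gamma_{X}^{p}$ secured for finite spaces by Theorem \ref{mainthm2}; Corollary \ref{maincor} merely records this implication in ready-to-use form. Consequently the only steps requiring attention are bookkeeping rather than estimation: ensuring that the hypothesis of strict $\wp$-negative type legitimately unlocks Corollary \ref{maincor} at the exponent $p = \wp$ (it does, since strictness presupposes ordinary $\wp$-negative type), and flagging at the outset that the claim concerns the case of a finite supremal exponent. In particular, no compactness or continuity argument is needed here to show that the supremum is attained, since supremality is invoked only to generate the contradiction, and the hypothesis of strictness already guarantees the relevant negative-type inequality at $\wp$.
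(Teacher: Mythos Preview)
Your proof is correct and follows exactly the approach the paper intends: the corollary is stated as ``an immediate consequence of Corollary~\ref{maincor},'' and your contradiction argument simply unpacks that immediacy --- strictness at $\wp$ would, via Corollary~\ref{maincor}, push negative type past $\wp$, contradicting supremality. Your handling of the $\wp = \infty$ case and your remark that attainment of the supremum need not be separately argued are both appropriate clarifications of points the paper leaves implicit.
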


\noindent Moreover, since $p$-negative type holds on closed intervals, we therefore obtain
an interesting case of equality in the classical negative type inequalities as
a direct consequence of Corollary \ref{strict}.

\begin{cor}\label{equality}
Let $(X,d)$ be a finite metric space. Let $\wp$ denote the supremal $p$-negative type of $(X,d)$.
If $\wp < \infty$ then there exists a normalized $(s,t)$-simplex $D(\vec{\omega}) = [a_{j}(m_{j});b_{i}(n_{i})]_{s,t}$
in $X$ such that $\gamma_{D}^{\wp}(\vec{\omega}) = 0$. In other words, we obtain:
\begin{eqnarray*}
& & \sum\limits_{1 \leq j_{1} < j_{2} \leq s} m_{j_{1}}m_{j_{2}}d(a_{j_{1}},a_{j_{2}})^{\wp} +
\sum\limits_{1 \leq i_{1} < i_{2} \leq t} n_{i_{1}}n_{i_{2}}d(b_{i_{1}},b_{i_{2}})^{\wp} \\
& = & \sum\limits_{j,i=1}^{s,t} m_{j}n_{i}d(a_{j},b_{i})^{\wp}.
\end{eqnarray*}
\end{cor}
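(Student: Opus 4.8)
The plan is to produce the equality by playing off the two facts already established: that the supremal exponent $\wp$ is genuinely \emph{attained} as an exponent of (non-strict) $\wp$-negative type, and that, by Corollary \ref{strict}, this same exponent fails to be an exponent of \emph{strict} $\wp$-negative type. The tension between ``$(X,d)$ has $\wp$-negative type'' and ``$(X,d)$ does not have it strictly'' is exactly what forces one of the generalized roundness inequalities (\ref{TWO}) to collapse to an equality at $p=\wp$.

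First I would verify that $(X,d)$ really does have $\wp$-negative type. Since $\wp < \infty$ and $p$-negative type holds on a closed interval of exponents, the right endpoint $\wp$ is included. Alternatively, and self-containedly, I would argue by continuity in the exponent: for each fixed normalized $(s,t)$-simplex $D(\vec{\omega})$ all relevant distances are positive (the $s+t$ coordinates are pairwise distinct), so $p \mapsto \gamma_{D}^{p}(\vec{\omega})$ is continuous, and hence the non-strict inequalities $\gamma_{D}^{p}(\vec{\omega}) \geq 0$ that hold for all $p < \wp$ persist in the limit $p = \wp$. By Theorem \ref{REMGR}(a) this says precisely that $(X,d)$ has generalized roundness $\wp$, i.e. $\gamma_{D}^{\wp}(\vec{\omega}) \geq 0$ for every normalized $(s,t)$-simplex $D(\vec{\omega})$ in $X$.

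Next I would invoke Corollary \ref{strict}: the supremal $p$-negative type cannot be strict, so $(X,d)$ does \emph{not} have strict $\wp$-negative type, whence, by Theorem \ref{REMGR}(b), it does not have strict generalized roundness $\wp$. Reading Definition \ref{GRUNT}(b), ``strict generalized roundness $\wp$'' means $(X,d)$ has generalized roundness $\wp$ \emph{and} every inequality (\ref{TWO}) is strict. The first clause already holds by the previous step, so the failure must lie in the second: there exists a normalized $(s,t)$-simplex $D(\vec{\omega}) = [a_{j}(m_{j});b_{i}(n_{i})]_{s,t}$ for which (\ref{TWO}) is not strict, i.e. $\gamma_{D}^{\wp}(\vec{\omega}) \leq 0$. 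Combined with $\gamma_{D}^{\wp}(\vec{\omega}) \geq 0$ from the first step this yields $\gamma_{D}^{\wp}(\vec{\omega}) = 0$, which, upon writing out $\gamma_{D}^{\wp}(\vec{\omega})$ via Definition \ref{SGAP}, is exactly the displayed equality.

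The one point that deserves care is the attainment in the first step, namely that the supremum $\wp$ is realized as an honest exponent of $\wp$-negative type rather than merely approached; this is where closedness of the negative-type interval (or the continuity-in-$p$ argument) does the real work, and it is the reason the hypothesis $\wp < \infty$ cannot be dropped. I note that one could instead route the existence through Theorem \ref{mainthm2}, deducing $\Gamma_{X}^{\wp} = 0$ and then using the compactness argument of its proof to attain the minimum on some $\overline{N_{s,t}}$; the only mild nuisance there is that a minimizer on the boundary of $\overline{N_{s,t}}$ has some vanishing weights, but restricting to its strictly positive coordinates yields a genuine normalized $(s',t')$-simplex (with $s',t' \geq 1$, since the normalizations $\sum m_{j} = \sum n_{i} = 1$ forbid an entire block from vanishing) on which the gap is still $0$. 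Working directly from the pointwise Definition \ref{GRUNT}(b), as above, sidesteps this nuisance, since the negation of ``all inequalities strict'' already supplies a genuine simplex with equality.
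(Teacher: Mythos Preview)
Your proposal is correct and follows essentially the same route as the paper: the text preceding Corollary~\ref{equality} states it as ``a direct consequence of Corollary~\ref{strict}'' together with the fact that $p$-negative type holds on closed intervals, which is precisely your two-step argument (attainment of $\wp$-negative type at the endpoint, then failure of strictness forcing an equality via Theorem~\ref{REMGR}). Your additional remarks on the continuity-in-$p$ alternative and the compactness route through Theorem~\ref{mainthm2} are sound elaborations beyond what the paper spells out.
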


\begin{cor}\label{strictexs}
The following finite metric spaces all have strict $q$-negative type for some interval of values
$q \in [1, 1 + \zeta)$ (where $\zeta > 0$ depends upon the particular space):
\begin{enumerate}
\item[(a)] Any three-point metric space.

\item[(b)] Any finite metric tree.

\item[(c)] Any finite isometric subspace of a $k$-sphere $\mathbb{S}^{k}$ (endowed with the usual
geodesic metric) that contains at most one pair of antipodal points.

\item[(d)] Any finite isometric subspace of the hyperbolic space $\mathbb{H}_{\mathbb{R}}^{k}$
(or $\mathbb{H}_{\mathbb{C}}^{k}$).

\item[(e)] Any finite isometric subspace of a Hadamard manifold.
\end{enumerate}
\end{cor}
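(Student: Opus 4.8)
The plan is to reduce everything to the case $p = 1$. Observe first that (d) is subsumed by (e), since the real and complex hyperbolic spaces $\mathbb{H}_{\mathbb{R}}^{k}$ and $\mathbb{H}_{\mathbb{C}}^{k}$ are themselves Hadamard manifolds. For each of the remaining classes I would show that the finite metric space in question has strict $1$-negative type; Corollary \ref{maincor} (applied with $p = 1$) then furnishes at once an interval $[1, 1+\zeta)$, $\zeta > 0$, of strict $q$-negative type. Thus the whole corollary rests on verifying strict $1$-negative type in each case, and all the remaining work concerns only the exponent $p = 1$.

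The two combinatorial cases are quickly disposed of. For (b) I would simply invoke the theorem of Hjorth et al.\ \cite{HLM}, recalled above, that every finite metric tree has strict $1$-negative type. For (a) I would argue directly from Definition \ref{SGAP} and Theorem \ref{REMGR}(b): in a three-point space the only nontrivial normalized simplices are of type $(1,2)$ and $(2,1)$, and for a $(1,2)$-simplex $[a_{1}(1); b_{1}(n_{1}), b_{2}(n_{2})]$ one has $\gamma_{D}^{1}(\vec{\omega}) = n_{1}d(a_{1},b_{1}) + n_{2}d(a_{1},b_{2}) - n_{1}n_{2}d(b_{1},b_{2})$. Applying the triangle inequality $d(b_{1},b_{2}) \leq d(a_{1},b_{1}) + d(a_{1},b_{2})$ and using $1 - n_{2} = n_{1}$, $1 - n_{1} = n_{2}$ reduces this to $\gamma_{D}^{1}(\vec{\omega}) \geq n_{1}^{2}d(a_{1},b_{1}) + n_{2}^{2}d(a_{1},b_{2})$, which is strictly positive because the coordinates are pairwise distinct and $n_{1}+n_{2}=1$. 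The $(2,1)$ case is identical by symmetry, so every three-point metric space has strict $1$-negative type by Theorem \ref{REMGR}(b).

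For the geometric cases (c)--(e) I would proceed through the classical Hilbert-space picture of $1$-negative type. A metric space has $1$-negative type precisely when $d^{1/2}$ embeds isometrically into Hilbert space, say via $\phi$ with $\|\phi(x) - \phi(y)\|^{2} = d(x,y)$; expanding the inequality (\ref{ONE}) with $p = 1$ for such an embedding shows that its left side equals $-2\|\sum_{i}\eta_{i}\phi(x_{i})\|^{2}$ on the hyperplane $\sum_{i}\eta_{i} = 0$, so a finite subspace of $1$-negative type has \emph{strict} $1$-negative type if and only if the image points $\phi(x_{1}), \ldots, \phi(x_{n})$ are affinely independent. The ambient spaces in (c)--(e) all carry geodesic metrics of $1$-negative type by known results (see \cite{WW} and the references therein for the spherical and hyperbolic cases), so each finite subspace inherits $1$-negative type and the task becomes to rule out affine dependence of the embedded configuration. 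In the nonpositively curved setting of (d) and (e) no such degeneracy occurs, giving strict $1$-negative type with no extra hypothesis; on the positively curved sphere, however, two distinct antipodal pairs $\{x, x'\}$, $\{y, y'\}$ produce, via the $(2,2)$-simplex carrying equal weights $\tfrac{1}{2}$, an equality in (\ref{TWO}) (that is, an affine dependence among the four images), which is exactly why (c) must restrict to at most one antipodal pair.

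I expect the main obstacle to lie entirely in (c)--(e), and specifically in the passage from ordinary to \emph{strict} $1$-negative type, the reduction and the combinatorial cases being routine. Two points require care: first, assembling the correct citations for $1$-negative type of the ambient geodesic spaces, which is classical for spheres and rank-one symmetric spaces but least standard for general Hadamard manifolds; and second, proving that for (d) and (e) the embedding of every finite subspace is genuinely affinely independent, while isolating the antipodal configurations as the \emph{only} source of affine dependence in (c). The clean geometric dichotomy --- nonpositive curvature forbids the focusing that creates affine dependence, whereas the sphere admits precisely the antipodal degeneracy exhibited above --- is what I would aim to make precise, after which Corollary \ref{maincor} closes each case.
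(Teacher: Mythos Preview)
Your proposal follows exactly the paper's route: establish strict $1$-negative type for each class, then invoke Corollary \ref{maincor} with $p=1$. The paper simply cites \cite{HKM} and \cite{HLM} for all five cases rather than reproving them; in particular, what you flag as the ``main obstacle'' --- the affine-independence verification for (c)--(e) and the antipodal characterization on spheres --- is precisely the content of those two references, so your plan is sound and would merely reproduce results already in the literature.
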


\begin{proof}
All of the above finite metric spaces have strict $p$-negative type for $p=1$ by results given
in \cite{HKM} and \cite{HLM}. We may therefore apply Corollary \ref{maincor} \textit{en masse}.
\end{proof}

\section{Range of strict $p$-negative type}
It is a classical result of Schoenberg \cite[Theorem 2]{S2} that $p$-negative type holds
on closed intervals. More precisely, the set of all values of $p$ for which a given metric space $(X,d)$
has $p$-negative type is always an interval of the form $[0,\wp]$ or $[0,\infty)$.
Included here is the possibility that $\wp = 0$, in which case the interval degenerates to $\{ 0 \}$.
Examples of Enflo \cite{E} in tandem with Theorem \ref{REMGR} (a) imply that all
such intervals (degenerate or otherwise) can occur. Moreover, for intervals of the form
$[0, \wp]$ with $\wp > 0$, the examples given in \cite[Section 1]{E} are finite metric spaces.
In the case of the degenerate interval $\{ 0 \}$ the situation is slightly more delicate.
It follows from \cite[Theorem 2.1]{E} and Theorem \ref{REMGR} (a) that the Banach space $C[0,1]$
does not have $p$-negative type for any $p >0$.
In Theorems \ref{sint}, \ref{finint} and \ref{infint} we provide strict versions of \cite[Theorem 2]{S2}.
These theorems allow us to precisely codify the types of intervals on which a metric space
may have strict $p$-negative type. It is interesting to note that finite metric spaces
behave quite differently to infinite metric spaces in this respect. Theorems \ref{finint} and \ref{infint}
highlight this point.

In order to proceed we must first briefly recall some basic facts about kernels of positive type and kernels
conditionally of negative type. (In some important respects we are following Nowak \cite[Sections 2--4]{N}.)

\begin{defn} Let $X$ be a topological space.
\begin{enumerate}
\item[(a)] A kernel of \textit{positive type} on $X$ is a continuous function
$\Phi: X \times X \rightarrow \mathbb{C}$ such that for any $n \in \mathbb{N}$, any elements
$x_{1}, \ldots, x_{n} \in X$, and any complex numbers $\eta_{1}, \ldots, \eta_{n}$ we have:
\[
\sum\limits_{1 \leq i,j \leq n} \Phi(x_{i},x_{j}) \eta_{i} \overline{\eta_{j}} \geq 0.
\]

\item[(b)] A kernel \textit{conditionally of negative type} on $X$ is a continuous function
$\Psi: X \times X \rightarrow \mathbb{R}$ with the following three properties:
\begin{enumerate}
\item[(1)] $\Psi(x,x) = 0$ for all $x \in X$,

\item[(2)] $\Psi(x,y) = \Psi(y,x)$ for all $x,y \in X$, and

\item[(3)] for any $n \in \mathbb{N}$, any $x_{1}, \ldots, x_{n} \in X$, and any real numbers
$\eta_{1}, \ldots, \eta_{n}$ with $\eta_{1} + \cdots + \eta_{n} = 0$ we have:
\[
\sum\limits_{1 \leq i,j \leq n} \Psi(x_{i},x_{j}) \eta_{i}\eta_{j} \leq 0.
\]
\end{enumerate}
\end{enumerate}
\end{defn}

\noindent The following fundamental relationship between kernels of positive type and kernels conditionally of
negative type was given by Schoenberg \cite{S3}. For a short proof of this theorem we refer the reader
to Bekka et al.\ \cite[Theorem C.3.2]{BHV}.

\begin{thm}[Schoenberg \cite{S3}]\label{Scho}
Let $X$ be a topological space and $\Psi: X \times X \rightarrow \mathbb{R}$ be a continuous
kernel on $X$ such that $\Psi(x,x) = 0$ and $\Psi(x,y) = \Psi(y,x)$ for all $x,y \in X$. Then
$\Psi$ is conditionally of negative type if and only if the kernel $\Phi = e^{-t \Psi}$ is of positive
type for every $t \geq 0$.
\end{thm}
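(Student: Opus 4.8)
The plan is to prove the two implications separately, concentrating effort on the forward direction. Assume first that $\Psi$ is conditionally of negative type; I would begin by recording the classical \emph{linearization lemma}. Fix any basepoint $x_{0} \in X$ and, using $\Psi(x_{0},x_{0})=0$, set
$$\Phi_{0}(x,y) = \Psi(x,x_{0}) + \Psi(x_{0},y) - \Psi(x,y).$$
I claim $\Phi_{0}$ is of positive type. To verify this, take points $x_{1},\ldots,x_{n}$ and complex scalars $c_{1},\ldots,c_{n}$, adjoin $x_{0}$ with the weight $c_{0}=-\sum_{i=1}^{n}c_{i}$ so that $\sum_{i=0}^{n}c_{i}=0$, and apply the negative-type inequality to the enlarged system. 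Substituting for $c_{0}$ collapses the mixed terms and yields
$$\sum_{i,j=1}^{n}\Phi_{0}(x_{i},x_{j})\,c_{i}\overline{c_{j}} = -\sum_{i,j=0}^{n}\Psi(x_{i},x_{j})\,c_{i}\overline{c_{j}} \geq 0.$$
The passage to complex scalars is justified by the symmetry and real-valuedness of $\Psi$: the constraint $\sum c_{i}=0$ forces the real and imaginary parts to sum to zero separately, and the cross terms cancel, so the complex inequality reduces to the real one.

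Next I would exploit the elementary identity
$$e^{-t\Psi(x,y)} = e^{-t\Psi(x,x_{0})}\,e^{-t\Psi(x_{0},y)}\,e^{t\Phi_{0}(x,y)}.$$
The factor $e^{t\Phi_{0}}$ is of positive type because $\Phi_{0}$ is, together with the fact that the positive-type kernels form a cone closed under pointwise (Hadamard) products and pointwise limits: each term $t^{k}\Phi_{0}^{k}/k!$ is then of positive type, and the exponential series is a limit of such kernels. Multiplying by the rank-one factor $g(x)\overline{g(y)}$ with the real function $g(x)=e^{-t\Psi(x,x_{0})}$ preserves positive type. As a product of positive-type kernels, $e^{-t\Psi}$ is therefore of positive type for every $t \geq 0$.

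For the reverse direction I would assume $e^{-t\Psi}$ is of positive type for all $t \geq 0$ and fix $x_{1},\ldots,x_{n}$ with real $\eta_{1},\ldots,\eta_{n}$ satisfying $\sum\eta_{i}=0$. The function
$$F(t) = \sum_{i,j=1}^{n} e^{-t\Psi(x_{i},x_{j})}\,\eta_{i}\eta_{j}$$
is nonnegative for all $t \geq 0$ by positive type, is differentiable as a finite exponential sum, and satisfies $F(0)=\bigl(\sum_{i}\eta_{i}\bigr)^{2}=0$. Hence $t=0$ minimizes $F$ on $[0,\infty)$, which forces $F'(0^{+}) \geq 0$; since $F'(0)=-\sum_{i,j}\Psi(x_{i},x_{j})\eta_{i}\eta_{j}$, the desired negative-type inequality follows.

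The hard part will be the forward direction, and within it the two structural facts it rests upon: the linearization lemma producing the positive-type kernel $\Phi_{0}$ (a clean but slightly fiddly index manipulation), and the closure of the positive-type cone under Hadamard products and pointwise limits, which is precisely what upgrades positivity of $\Phi_{0}$ to positivity of $e^{t\Phi_{0}}$ and thence of $e^{-t\Psi}$. By contrast, the reverse direction is routine, requiring only differentiability of a finite exponential sum and a one-sided minimum argument.
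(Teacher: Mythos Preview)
Your argument is correct in both directions. The forward direction via the linearization $\Phi_{0}(x,y)=\Psi(x,x_{0})+\Psi(x_{0},y)-\Psi(x,y)$, the Schur product theorem to pass from $\Phi_{0}$ to $e^{t\Phi_{0}}$, and the rank-one multiplication is the standard route; the identity you record checks out, and your reduction from complex to real scalars using the symmetry of $\Psi$ is the right justification. The reverse direction via $F(0)=0$, $F\ge 0$, hence $F'(0)\ge 0$, is also clean and correct.

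Note, however, that the paper does \emph{not} supply its own proof of this theorem: immediately after the statement it writes ``For a short proof of this theorem we refer the reader to Bekka et al.\ \cite[Theorem C.3.2]{BHV}.'' So there is no in-paper proof to compare against. Your proof is in fact essentially the one given in that reference (the GNS-type linearization producing a positive-type kernel, followed by the exponential/Schur-product step), so in that sense your approach coincides with the cited source rather than differing from it.
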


\noindent Our proof of Theorem \ref{sint} makes use of the following
identity. An explanation of this identity may be found in the
proof of Corollary 3.2.10 in Berg et al.\ \cite{BCR}.

\begin{lem}\label{intlem}
For each $\alpha \in (0,1)$ there exists a constant $c_{\alpha}>0$ such that 
$$x^{\alpha} = c_{\alpha} \int\limits_{0}^{\infty} (1 - e^{-tx}) t^{- \alpha - 1} dt$$
for all $x\ge 0$.
\end{lem}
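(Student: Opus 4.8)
The plan is to reduce the integral to a dimensionless constant via a scaling substitution, so that the power law $x^{\alpha}$ emerges automatically. First I would dispose of the trivial case $x = 0$: both sides vanish, since $1 - e^{-t \cdot 0} = 0$ for every $t$. So fix $x > 0$ and set
$$C_{\alpha} = \int_{0}^{\infty} (1 - e^{-u}) u^{-\alpha - 1}\, du.$$

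Before manipulating the integral I would verify that $C_{\alpha}$ is a well-defined positive real number. The integrand is strictly positive on $(0,\infty)$, so positivity is immediate once finiteness is established. For finiteness I would split the integral at $u = 1$. Near $u = 0$ the elementary inequality $1 - e^{-u} \leq u$ bounds the integrand by $u^{-\alpha}$, which is integrable on $(0,1]$ precisely because $\alpha < 1$. Near $u = \infty$ we have $1 - e^{-u} \leq 1$, so the integrand is bounded by $u^{-\alpha - 1}$, which is integrable on $[1,\infty)$ precisely because $\alpha > 0$. Thus $0 < C_{\alpha} < \infty$, and this is the only place where the full hypothesis $\alpha \in (0,1)$ is used.

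With $C_{\alpha}$ in hand I would perform the substitution $u = tx$ (equivalently $t = u/x$, $dt = du/x$) in the integral of the lemma. Since $x > 0$ this is a genuine change of variables on $(0,\infty)$, and it yields
$$\int_{0}^{\infty} (1 - e^{-tx}) t^{-\alpha - 1}\, dt = \int_{0}^{\infty} (1 - e^{-u}) \left( \frac{u}{x} \right)^{-\alpha - 1} \frac{du}{x} = x^{\alpha} \int_{0}^{\infty} (1 - e^{-u}) u^{-\alpha - 1}\, du = C_{\alpha}\, x^{\alpha}.$$
Setting $c_{\alpha} = 1/C_{\alpha} > 0$ gives the claimed identity for all $x > 0$, and the case $x = 0$ was already checked.

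There is no serious obstacle here; the only point requiring genuine care is the \emph{simultaneous} integrability at both endpoints, which is exactly what pins down the admissible range $\alpha \in (0,1)$. If an explicit constant were desired, one integration by parts (taking antiderivative $-u^{-\alpha}/\alpha$ of $u^{-\alpha - 1}$) converts $C_{\alpha}$ into $\frac{1}{\alpha} \int_{0}^{\infty} e^{-u} u^{-\alpha}\, du = \frac{1}{\alpha} \Gamma(1 - \alpha)$, so that $c_{\alpha} = \alpha / \Gamma(1 - \alpha)$; the boundary terms vanish because $\alpha > 0$ kills the contribution at infinity while $\alpha < 1$ kills it at the origin.
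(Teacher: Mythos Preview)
Your argument is correct and complete: the scaling substitution $u = tx$ together with the integrability check at both endpoints is exactly the standard derivation, and your bonus computation of $c_{\alpha} = \alpha/\Gamma(1-\alpha)$ via integration by parts is also right. The paper itself does not prove this lemma but simply cites the proof of Corollary~3.2.10 in Berg, Christensen and Ressel \cite{BCR}, where the same scaling argument appears; so you have supplied what the paper outsources.
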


\begin{thm}\label{sint}
Let $(X,d)$ be a metric space. If $(X,d)$ has $p$-negative type for some $p > 0$, then it must have
strict $q$-negative type for all $q$ such that $0 \leq q < p$.
\end{thm}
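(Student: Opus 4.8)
The plan is to realize the $q$-th power distances as an integral transform of the $p$-th power distances and then exploit Schoenberg's theorem together with a tail estimate to upgrade ordinary $q$-negative type to strict $q$-negative type. Since every metric space has strict $0$-negative type (as noted in the remark following Definition~\ref{types}), it suffices to treat $q \in (0,p)$. Comparing Definition~\ref{types}(a) with the definition of a kernel conditionally of negative type, the hypothesis that $(X,d)$ has $p$-negative type is precisely the statement that $\Psi(x,y) = d(x,y)^{p}$ is a continuous kernel conditionally of negative type on $X$. Writing $\alpha = q/p \in (0,1)$, so that $d(x,y)^{q} = \Psi(x,y)^{\alpha}$, Lemma~\ref{intlem} applied pointwise gives the representation $d(x,y)^{q} = c_{\alpha}\int_{0}^{\infty}(1 - e^{-t\Psi(x,y)})\,t^{-\alpha-1}\,dt$ with $c_{\alpha} > 0$.

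Next I would fix an arbitrary finite set of pairwise distinct points $x_{1},\dots,x_{k} \in X$ and real numbers $\eta_{1},\dots,\eta_{k}$, not all zero, with $\eta_{1}+\cdots+\eta_{k}=0$, and substitute the integral representation into the quadratic form. Since the sum over $i,j$ is finite, I may interchange it with the integral, and because $\sum_{i}\eta_{i}=0$ the constant term $\sum_{i,j}\eta_{i}\eta_{j}=(\sum_{i}\eta_{i})^{2}$ vanishes. This reduces the form to
$$\sum_{1 \le i,j \le k} d(x_{i},x_{j})^{q}\eta_{i}\eta_{j} = -\,c_{\alpha}\int_{0}^{\infty} Q(t)\,t^{-\alpha-1}\,dt, \qquad Q(t) = \sum_{1 \le i,j \le k} e^{-t\Psi(x_{i},x_{j})}\eta_{i}\eta_{j}.$$
By Theorem~\ref{Scho}, the conditional negative type of $\Psi$ forces $e^{-t\Psi}$ to be a kernel of positive type for every $t \ge 0$, so $Q(t) \ge 0$ throughout. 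This already recovers (non-strict) $q$-negative type; the work lies in showing the integral is \emph{strictly} positive.

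The crux is the observation that strictness does not come from any strictness in the $p$-negative type inequality (which we do not assume), but from the behaviour of $Q$ as $t \to \infty$. Splitting off the diagonal, $Q(t) = \sum_{i}\eta_{i}^{2} + \sum_{i \ne j} e^{-t\,d(x_{i},x_{j})^{p}}\eta_{i}\eta_{j}$; because the points are pairwise distinct and $p>0$, every off-diagonal exponent $d(x_{i},x_{j})^{p}$ is strictly positive, so each off-diagonal term tends to $0$ and $Q(t) \to \sum_{i}\eta_{i}^{2} =: M > 0$. Hence there is a threshold $T$ with $Q(t) > M/2$ for all $t \ge T$, and since $Q \ge 0$ on $[0,T]$ I can estimate $\int_{0}^{\infty}Q(t)t^{-\alpha-1}\,dt \ge \tfrac{M}{2}\int_{T}^{\infty}t^{-\alpha-1}\,dt > 0$, whence $\sum_{i,j}d(x_{i},x_{j})^{q}\eta_{i}\eta_{j} < 0$, as required for strict $q$-negative type.

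The main obstacle I anticipate is bookkeeping rather than conceptual: I must confirm that the interchange of the finite sum with the integral is legitimate and that $\int_{0}^{\infty}Q(t)t^{-\alpha-1}\,dt$ converges. Both follow from Lemma~\ref{intlem} itself, since each summand $(1 - e^{-t\,d(x_{i},x_{j})^{p}})t^{-\alpha-1}$ is integrable with integral $c_{\alpha}^{-1}d(x_{i},x_{j})^{q}$; equivalently, near $t=0$ one has $Q(t)=O(t)$ so that $Q(t)t^{-\alpha-1}=O(t^{-\alpha})$ is integrable as $\alpha<1$, while near $t=\infty$ the bounded limit $M$ makes $Q(t)t^{-\alpha-1}=O(t^{-\alpha-1})$ integrable as well. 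With these routine points dispatched, the argument above yields strict $q$-negative type for every $q \in (0,p)$, and together with the trivial case $q=0$ this completes the proof.
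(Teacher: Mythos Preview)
Your proof is correct and follows essentially the same approach as the paper's: both use the integral representation from Lemma~\ref{intlem} together with Schoenberg's Theorem~\ref{Scho} to express the $q$-form as an integral of positive-type kernels, and both obtain strictness from the tail behaviour $Q(t)\to\sum_i\eta_i^{2}>0$ (your $Q(t)$ is simply the paper's $-f(t)$). Your treatment is slightly more explicit about integrability near $t=0$ and $t=\infty$, which the paper leaves implicit.
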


\begin{proof}
Every metric space has strict $0$-negative type.
So we may assume that $q > 0$.
Since $(X, d)$ has $p$-negative type, the function $\Psi: X\times X\rightarrow \Rb$ defined by $\Psi(x, y)=d(x, y)^p$
is conditionally of negative type. Hence, by Theorem \ref{Scho}, the function
$e^{-t\Psi}: X\times X\rightarrow \Cb$ is of positive type for every $t\ge 0$.

Let $x_1, \ldots , x_n$ ($n \geq 2$) be distinct
points in $X$ and let $\eta_1, \ldots, \eta_n$ be real numbers, not all zero, such that $\sum_j\eta_j=0$.
We need to show that $\sum_{i, j}d(x_i, x_j)^q\eta_i\eta_j < 0$.

For each $t \ge 0$, set $$f(t)=\sum_{i,j}(1-e^{-td(x_i, x_j)^p})\eta_i\eta_j.$$ 
Then
\begin{eqnarray*}
f(t)&=&\sum_{i, j}\eta_i\eta_j-\sum_{i, j}e^{-td(x_i, x_j)^p}\eta_i\eta_j
=\biggl(\sum_j\eta_j\biggl)^2-\sum_{i, j}e^{-td(x_i, x_j)^p}\eta_i\eta_j\\
&=&
-\sum_{i, j}e^{-td(x_i, x_j)^p}\eta_i\eta_j\le 0
\end{eqnarray*}
for all $t\ge 0$. When $t\to \infty$, one has $f(t)\to -\sum_j\eta^2_j<0$. Thus $f(t)<0$ for all $t$ sufficiently large. 
Set $\alpha=q/p$. By applying Lemma \ref{intlem} to $x=d(x_i, x_j)^p$, one gets 
\begin{eqnarray*}
\sum_{i, j}d(x_i,x_j)^q\eta_i\eta_j&=&\sum_{i, j}
\biggl(c_{\alpha}\int^{\infty}_0(1-e^{-td(x_i, x_j)^p})t^{-\alpha-1}\, dt \biggl)\eta_i\eta_j\\
&=&
c_{\alpha}\int^{\infty}_0f(t)t^{-\alpha-1}dt<0,
\end{eqnarray*}
as desired.
\end{proof}

\noindent As an immediate consequence of Corollary \ref{strict}, Theorem \ref{sint}, \cite[Theorem 4.3]{W}
and the examples given in \cite[Section 1]{E} we obtain the following theorem.

\begin{thm}\label{finint}
Let $(X,d)$ be a finite metric space.
The set of all values of $p$ for which $(X,d)$ has strict $p$-negative type
is always an interval of the form $[0,\wp)$, with $\wp > 0$, or $[0,\infty)$.
Moreover, all such intervals can occur.
\end{thm}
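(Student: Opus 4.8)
The plan is to describe the set $S = \{\, p \geq 0 : (X,d) \text{ has strict } p\text{-negative type}\,\}$ directly in terms of the set $N = \{\, p \geq 0 : (X,d) \text{ has } p\text{-negative type}\,\}$, whose shape is already known from Schoenberg \cite[Theorem 2]{S2}: either $N = [0,\wp]$ or $N = [0,\infty)$. Since strict $p$-negative type entails $p$-negative type we always have $S \subseteq N$, and since every metric space has strict $0$-negative type we always have $0 \in S$. The heart of the matter is therefore to pin down how much of $N$ lies in $S$ and, in the finite case, whether the right endpoint $\wp$ belongs to $S$.

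First I would use Theorem \ref{sint} to capture the interior. If $N = [0,\wp]$ with $\wp > 0$, then $(X,d)$ has $\wp$-negative type with $\wp > 0$, so Theorem \ref{sint} yields strict $q$-negative type for every $q \in [0,\wp)$; hence $[0,\wp) \subseteq S \subseteq [0,\wp]$. Likewise if $N = [0,\infty)$ then for each $q \geq 0$ one may pick some $p > q$ in $N$ and apply Theorem \ref{sint} to conclude $q \in S$, so $S = [0,\infty)$. It remains to treat the finite endpoint. Here Corollary \ref{strict} states precisely that the supremal $p$-negative type of a finite metric space is never strict, so $\wp \notin S$ and therefore $S = [0,\wp)$. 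To see that this half-open interval is non-degenerate I would invoke \cite[Theorem 4.3]{W}, which guarantees that every finite metric space has strict $p$-negative type for some $p > 0$; this forces $\wp > 0$ and, in particular, $N \neq \{0\}$, so the degenerate interval cannot arise for a finite metric space. This establishes that $S$ is always of the form $[0,\wp)$ with $\wp > 0$ or $[0,\infty)$.

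For the realizability claim I would exhibit one family of examples for each admissible interval. The interval $[0,\infty)$ is attained by any finite set carrying a positive multiple of the discrete metric, which has strict $q$-negative type for all $q \geq 0$, as was already noted in the proof of Theorem \ref{mainthm}. For the intervals $[0,\wp)$ with $\wp > 0$, I would appeal to the finite metric spaces constructed in \cite[Section 1]{E}: in tandem with Theorem \ref{REMGR} (a) these realize every value $\wp \in (0,\infty)$ as the supremal ordinary $p$-negative type, i.e.\ $N = [0,\wp]$, and by the first part of the argument each such space then has strict range exactly $S = [0,\wp)$. I expect the only genuinely delicate point to be the exclusion of the right endpoint: everything else is a direct assembly of Schoenberg's interval theorem and Theorem \ref{sint}, but the assertion that $\wp$ itself never supports strict negative type rests entirely on Corollary \ref{strict}, whose proof in turn depends on the compactness argument of Theorem \ref{mainthm2} and thus on the finiteness of $X$.
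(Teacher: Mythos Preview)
Your proposal is correct and follows essentially the same route as the paper: the paper's entire proof is the one-line remark that the theorem is an immediate consequence of Corollary~\ref{strict}, Theorem~\ref{sint}, \cite[Theorem~4.3]{W}, and the examples in \cite[Section~1]{E}, and you have simply unpacked this into the same logical chain (Schoenberg's interval shape, Theorem~\ref{sint} for the interior, Corollary~\ref{strict} to exclude the endpoint, \cite[Theorem~4.3]{W} to force $\wp>0$, and Enflo's examples plus the discrete metric for realizability). Your additional observation that the exclusion of the endpoint is the only step genuinely requiring finiteness is accurate and matches the paper's broader discussion.
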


\noindent By way of marked contrast with Theorem \ref{finint}, we note that (a) the set of
all values of $p$ for which the Banach space $C[0,1]$ has strict $p$-negative type is the
degenerate interval $\{ 0 \}$ (this follows from \cite[Theorem 2.1]{E} and Theorem
\ref{REMGR}), and (b) the supremal $p$-negative type of an infinite metric space may or
may not be strict. For example, in \cite[Theorem 5.7]{DW} the authors construct an infinite
metric tree that has strict $p$-negative type if and only if $p \in [0,1]$. However, the
Banach space $\ell_{1}$ has strict $p$-negative type if and only if $p \in [0,1)$.
Our next theorem points out that for each $\wp > 0$ there is an
infinite metric space $(X,d)$ that has strict $p$-negative type if and only if $p \in [0,\wp]$.
The following definition introduces the relevant spaces.

\begin{defn}\label{penflo}
Let $\wp > 0$. Let $(\wp_{k})$ be a strictly decreasing sequence of real numbers that converges to $\wp$.
Let $n$ be a natural number that satisfies the condition: $$\left(1 - \frac{1}{n}\right)^{1/\wp} \geq 1/2.$$
Let $Y$ be the union of a sequence of pairwise disjoint sets $(Y_{1},Y_{2},Y_{3}, \ldots)$
such that $|Y_{k}|=n$ for all $k \in \mathbb{N}$.
Let $Z$ be the union of a sequence of pairwise disjoint sets $(Z_{1},Z_{2},Z_{3}, \ldots)$
such that $|Z_{k}|=n$ for all $k \in \mathbb{N}$ and $Y \cap Z = \emptyset$.
Set $X = Y \cup Z$. We metrize $X$ in the following way:
\[
d(y,z) = \left( 1 - \frac{1}{n} \right)^{1 / \wp_{k}}
\]
if $y \in Y_{k}$ and $z \in Z_{k}$ for some $k \in \mathbb{N}$.
All other non-zero distances in the space are taken to be one.
We call $(X,d)$ an \textit{Enflo $\wp$-space}.
\end{defn}

\noindent Notice that in Definition \ref{penflo} the condition placed on $n$ ensures that
$(X,d)$ really is a metric space. Moreover, from the examples noted in \cite[Section 1]{E},
together with Theorem \ref{REMGR} (a), it follows that each subspace $Y_{k} \cup Z_{k}$ of
an Enflo $\wp$-space $(X,d)$ has maximal $p$-negative type exactly equal to $\wp_{k}$.
In order to proceed we need to develop a slightly stronger statement about certain subspaces of $(X,d)$.

\begin{lem}\label{sub}
Let $\wp > 0$. Let $(X,d)$ be an Enflo $\wp$-space as in Definition \ref{penflo}.
For each $m \in \mathbb{N}$, the subspace $X_{m}
= \bigcup \{ Y_{k} \cup Z_{k} | 1 \leq k \leq m \}$ of $(X,d)$ has $\wp_{m}$-negative type.
In fact, $\wp_{m}$ is the maximal $p$-negative type of the subspace $X_{m}$.
\end{lem}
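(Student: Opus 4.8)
The plan is to establish two separate things: that $X_m$ has $\wp_m$-negative type, and that $X_m$ has no $p$-negative type for any $p > \wp_m$. The maximality (second) assertion is immediate from what is already recorded above. The block $Y_m \cup Z_m$ is a metric subspace of $X_m$ and has maximal $p$-negative type exactly $\wp_m$. Since the defining inequalities (\ref{ONE}) restrict verbatim to any subset of points, $p$-negative type is inherited by subspaces; hence if $X_m$ had $p$-negative type for some $p > \wp_m$, so would $Y_m \cup Z_m$, a contradiction. Thus it remains only to prove that $X_m$ has $\wp_m$-negative type, which is the substance of the lemma.

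For that, I would write $q = \wp_m$, fix distinct points $x_1, \ldots, x_N \in X_m$ and reals $\eta_1, \ldots, \eta_N$ with $\sum_i \eta_i = 0$, and show $\sum_{i,j} d(x_i,x_j)^q \eta_i \eta_j \le 0$. The decisive structural feature is that in $X_m$ the only distances different from $1$ are the within-block cross distances $(1-1/n)^{1/\wp_k}$ between $Y_k$ and $Z_k$. I would therefore group the chosen points by block and by side, letting $U_k$ (resp. $V_k$) denote the sum of the $\eta_i$ over chosen points lying in $Y_k$ (resp. $Z_k$), and set $c_k = (1-1/n)^{q/\wp_k}$. Using the pointwise identity $d(x_i,x_j)^q = 1 - [\,i=j\,] - (1-c_k)\,[\,x_i,x_j \text{ lie on opposite sides of a common block } k\,]$ together with $\sum_i \eta_i = 0$, the whole double sum collapses to
\[
\sum_{i,j} d(x_i,x_j)^q \eta_i \eta_j = -\sum_i \eta_i^2 - 2\sum_{k=1}^m (1-c_k)\, U_k V_k,
\]
so the target inequality is equivalent to $2\sum_{k} (1-c_k)(-U_k V_k) \le \sum_i \eta_i^2$, which I would prove one block at a time.

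The per-block estimate I aim for is $2(1-c_k)(-U_kV_k) \le (U_k^2+V_k^2)/n \le \sum_{x_i \in Y_k \cup Z_k}\eta_i^2$; summing it over $k$ yields exactly $\sum_i \eta_i^2$. Its two ingredients are the bound $1-c_k \le 1/n$ and Cauchy--Schwarz in the forms $U_k^2 \le n\sum_{x_i\in Y_k}\eta_i^2$ and $V_k^2 \le n\sum_{x_i\in Z_k}\eta_i^2$ (using that each side of a block has at most $n$ chosen points). The bound $1-c_k \le 1/n$ is exactly where the choice $q=\wp_m$ is used: for $k \le m$ one has $\wp_k \ge \wp_m = q$, so $q/\wp_k \le 1$ and hence $c_k = (1-1/n)^{q/\wp_k} \ge 1 - 1/n$. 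The step I expect to require the most care is the sign analysis for the per-block estimate: when $U_kV_k \ge 0$ the left-hand side is $\le 0$ and nothing is needed, whereas when $U_kV_k < 0$ one combines $1-c_k \le 1/n$ with $U_k^2 + V_k^2 \ge -2U_kV_k$. Finally it is worth noting that for $k=m$ one has $c_m = 1-1/n$ and every inequality above becomes an equality for uniform weights $\pm 1/n$ on $Y_m$ and $Z_m$; the sum then equals $0$, which simultaneously reconfirms that $\wp_m$ is maximal and that $q$-negative type at $q = \wp_m$ fails to be strict.
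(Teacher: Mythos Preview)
Your argument is correct and follows essentially the same route as the paper's proof: a block-by-block reduction using the key bound $1-c_k\le 1/n$ (valid because $\wp_k\ge\wp_m$ for $k\le m$) together with Cauchy--Schwarz in the form $U_k^2\le n\sum_{x_i\in Y_k}\eta_i^2$, and then summation over $k$. The only presentational difference is that the paper carries out the computation in the generalized roundness framework of inequality~(\ref{TWO}), which forces a four-way split of each block (into $\alpha_k,\beta_k,\gamma_k,\eta_k$ according to side of the simplex and $Y_k$ versus $Z_k$), whereas you work directly with inequality~(\ref{ONE}); your aggregates $U_k,V_k$ are exactly $\alpha_k-\gamma_k$ and $\beta_k-\eta_k$ in the paper's notation, and your per-block estimate is the same inequality as the paper's~(\ref{TWELVEa}) after this substitution. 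Your formulation is a bit cleaner for that reason, but the mathematical content is identical.
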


\begin{proof}
Let $D$ be a given normalized simplex in $X_{m}$. Without loss of generality, we may assume that
$D=[y_{k,j}(\alpha_{k,j}), z_{k,j}(\beta_{k,j}); \bar{y}_{k,j}(\gamma_{k,j}),
\bar{z}_{k,j}(\eta_{k,j})]$, where $y_{k,j}, \bar{y}_{k,j} \in Y_{k}$ and
$z_{k,j}, \bar{z}_{k,j} \in Z_{k}$. In other words, our
simplex has the points $y_{k,j}, z_{k,j}$ on one side and the remaining points
$\bar{y}_{k,j}, \bar{z}_{k,j}$ on the other side (with weights as indicated).

Let $L(\wp_{m})$ and $R(\wp_{m})$ denote the left side and right side of (\ref{TWO})
computed relative to the simplex $D$ with exponent $\wp_{m}$ (respectively). By setting
$\alpha_{k} = \sum_{j} \alpha_{k,j}, \beta_{k} = \sum_{j} \beta_{k,j}, \gamma_{k}
= \sum_{j} \gamma_{k,j}$ and $\eta_{k} = \sum_{j} \eta_{k,j}$ we may then
compute $L(\wp_{m})$ and $R(\wp_{m})$:

\begin{eqnarray}\label{EIGHT}
L(\wp_m) & = & \sum_k(\alpha_k \beta_k+\gamma_k
\eta_k)\biggl\{\biggl(1-\frac{1}{n}\biggl)^{\wp_m/\wp_k}-1\biggl\} + 1 \nonumber \\
& & -\frac{1}{2}\sum_k\biggl(\sum_j\alpha_{k,j}^2+\sum_j\beta_{k, j}^2
+\sum_j\gamma_{k, j}^2+\sum_j\eta_{k, j}^2\biggl),
\end{eqnarray}
while
\begin{eqnarray}\label{NINE}
R(\wp_m) & = & \sum_k(\alpha_k \eta_k+\gamma_k
\beta_k)\biggl\{\biggl(1-\frac{1}{n}\biggl)^{\wp_m/\wp_k}-1\biggl\}+1.
\end{eqnarray}

If we let $s_{k}$ denote the number of points $y_{k,j}$ in $Y_{k}$ and
let $t_{k}$ denote the number of points $\bar{y}_{k,j}$ in $Y_{k}$,
then it follows that we have:
\begin{eqnarray*}
\sum\limits_{j} \alpha_{k,j}^{2} +
\sum\limits_{j} \gamma_{k,j}^{2} & \geq & \frac{\alpha_k^2}{s_k}+\frac{\gamma_{k}^2}{t_k} \\
                                 & \geq & \frac{\alpha_k^2+\gamma_k^2}{n}.
\end{eqnarray*}
Similarly,
\begin{eqnarray*}
\sum\limits_{j} \beta_{k,j}^{2} + \sum\limits_{j} \eta_{k,j}^{2}
& \geq & \frac{\beta_{k}^{2} + \eta_{k}^{2}}{n}.
\end{eqnarray*}

As a result, comparing the expressions (\ref{EIGHT}) and (\ref{NINE}), we see that it
will follow that $L(\wp_{m}) \leq R(\wp_{m})$ provided that we can establish the following
non-linear inequality:

\begin{eqnarray}\label{ELEVENa}
& & \sum\limits_{k} \biggl\{ (\alpha_{k}\beta_{k} + \gamma_{k}\eta_{k})
- (\alpha_{k} \eta_{k} + \gamma_{k} \beta_{k}) \biggl\}
\biggl\{ \biggl( 1 - \frac{1}{n} \biggl)^{\wp_{m}/\wp_{k}} -1 \biggl\} \nonumber \\
& \leq & \frac{1}{2}\sum_k\frac{\alpha_k^2+\gamma_k^2+\beta_k^2+\eta_k^2}{n}.
\end{eqnarray}

We claim that (\ref{ELEVENa}) holds term by term. That is to say,
\begin{eqnarray}\label{TWELVEa}
& & \biggl\{ (\alpha_{k}\beta_{k} + \gamma_{k}\eta_{k})
- (\alpha_{k} \eta_{k} + \gamma_{k} \beta_{k}) \biggl\}
\biggl\{ \biggl( 1 - \frac{1}{n} \biggl)^{\wp_{m}/\wp_{k}} -1 \biggl\} \nonumber \\
& \leq & \frac{1}{2} \cdot \frac{\alpha_k^2+\gamma_k^2+\beta_k^2+\eta_k^2}{n},
\end{eqnarray}
for each $k$. In fact, for each $k$,
\begin{eqnarray*}
& & \biggl| \biggl\{ (\alpha_{k}\beta_{k} + \gamma_{k}\eta_{k})
- (\alpha_{k} \eta_{k} + \gamma_{k} \beta_{k}) \biggl\}
\biggl\{ \biggl( 1 - \frac{1}{n} \biggl)^{\wp_{m}/\wp_{k}} -1 \biggl\} \biggl| \\ 
& \leq & \max \bigl( \alpha_{k} \beta_{k} + \gamma_{k} \eta_{k},
                 \alpha_{k} \eta_{k} + \gamma_{k} \beta_{k} \bigl) \cdot \frac{1}{n} \\ 
& \leq & \frac{1}{2} \cdot \frac{\alpha_{k}^{2} + \beta_{k}^{2} + \gamma_{k}^{2} + \eta_{k}^{2}}{n}.
\end{eqnarray*}

\noindent As (\ref{TWELVEa}) implies (\ref{ELEVENa}) we conclude that
$L(\wp_{m}) \leq R(\wp_{m})$. Thus $X_{m}$ has maximal $p$-negative type at least $\wp_{m}$.
However the subspace $Y_{m} \cup Z_{m}$ of $X_{m}$ has maximal $p$-negative type $\wp_{m}$
by \cite[Section 1]{E} and Theorem \ref{REMGR} (a). We conclude that the maximal $p$-negative
type of $X_{m}$ is $\wp_{m}$.
\end{proof}

\begin{thm}\label{infint}
Let $\wp > 0$. Let $(X,d)$ be an Enflo $\wp$-space as in Definition \ref{penflo}.
Then $(X,d)$ has strict $p$-negative type if and only if
$p \in [0, \wp]$.
\end{thm}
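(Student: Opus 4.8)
The plan is to treat the two implications separately; the substantive half is sufficiency at the single exponent $p=\wp$, after which the open interval $[0,\wp)$ comes for free. I would first dispose of necessity. If $p>\wp$, then since $(\wp_{k})$ decreases strictly to $\wp$ there is a $k$ with $\wp_{k}<p$; by the remark following Definition \ref{penflo} the two-level subspace $Y_{k}\cup Z_{k}$ has maximal $p$-negative type exactly $\wp_{k}<p$, so it fails to have $p$-negative type. Since $p$-negative type is inherited by subspaces (the inequalities (\ref{ONE}) for a subset of $X$ form a subfamily of those for $X$), $(X,d)$ has no $p$-negative type for any $p>\wp$, and \emph{a fortiori} no strict $p$-negative type. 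Hence strict $p$-negative type forces $p\le\wp$.

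For sufficiency I would establish strict $\wp$-negative type directly. Fix an arbitrary normalized $(s,t)$-simplex $D$ in $X$; being finite it lies in some $X_{m}$, so I adopt the notation of Lemma \ref{sub} and recompute $L$ and $R$ as in (\ref{EIGHT}) and (\ref{NINE}) but with exponent $\wp$ in place of $\wp_{m}$, writing $c_{k}=(1-1/n)^{\wp/\wp_{k}}$. Using the coordinate estimates $\sum_{j}\alpha_{k,j}^{2}\ge\alpha_{k}^{2}/n$ and their three analogues exactly as in that proof, it suffices to prove that each summand
\[
B_{k}=\frac{1}{2}\cdot\frac{\alpha_{k}^{2}+\beta_{k}^{2}+\gamma_{k}^{2}+\eta_{k}^{2}}{n}-\bigl\{(\alpha_{k}\beta_{k}+\gamma_{k}\eta_{k})-(\alpha_{k}\eta_{k}+\gamma_{k}\beta_{k})\bigr\}(c_{k}-1)
\]
is non-negative and that at least one of them is strictly positive, since then $R(\wp)-L(\wp)\ge\sum_{k}B_{k}>0$.

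The main obstacle --- and the crux of the whole theorem --- is extracting strictness here. In Lemma \ref{sub} the corresponding bound (\ref{TWELVEa}) rests on $|(1-1/n)^{\wp_{m}/\wp_{k}}-1|\le 1/n$, an estimate that is an \emph{equality} at the top level $k=m$ (where $\wp_{m}/\wp_{k}=1$); this is exactly why the truncation $X_{m}$ need not have strict $\wp_{m}$-negative type. At the exponent $\wp$, however, $\wp<\wp_{k}$ for \emph{every} $k$, so $\wp/\wp_{k}<1$ strictly and therefore $|c_{k}-1|<1/n$ strictly for all $k$. Feeding this strict bound into the chain $|(\alpha_{k}\beta_{k}+\gamma_{k}\eta_{k})-(\alpha_{k}\eta_{k}+\gamma_{k}\beta_{k})|\le\max(\alpha_{k}\beta_{k}+\gamma_{k}\eta_{k},\ \alpha_{k}\eta_{k}+\gamma_{k}\beta_{k})\le\tfrac12(\alpha_{k}^{2}+\beta_{k}^{2}+\gamma_{k}^{2}+\eta_{k}^{2})$ shows $B_{k}\ge0$ always, with $B_{k}=0$ precisely when $\alpha_{k}=\beta_{k}=\gamma_{k}=\eta_{k}=0$. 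Because a genuine $(s,t)$-simplex has at least one $a$-side vertex of positive weight, some level $k_{0}$ has $\alpha_{k_{0}}^{2}+\beta_{k_{0}}^{2}+\gamma_{k_{0}}^{2}+\eta_{k_{0}}^{2}>0$, whence $B_{k_{0}}>0$ and inequality (\ref{TWO}) is strict for $D$. As $D$ was arbitrary, Theorem \ref{REMGR}(b) yields strict $\wp$-negative type; in particular $(X,d)$ has $\wp$-negative type, so Theorem \ref{sint} gives strict $q$-negative type for all $q\in[0,\wp)$. Combining the endpoint with the open interval, $(X,d)$ has strict $p$-negative type exactly for $p\in[0,\wp]$, as desired.
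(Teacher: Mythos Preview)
Your proof is correct, but the paper takes a cleaner route that avoids redoing the simplex-gap computation. The paper simply observes that every finite simplex in $X$ sits inside some truncation $X_{m}$; by Lemma \ref{sub} this $X_{m}$ has $\wp_{m}$-negative type, and since $\wp<\wp_{m}$, Theorem \ref{sint} applied to the finite space $X_{m}$ immediately gives strict $p$-negative type on all of $[0,\wp_{m})\supseteq[0,\wp]$. Thus the strict inequality for the given simplex at every $p\in[0,\wp]$ drops out without touching the formulas (\ref{EIGHT})--(\ref{TWELVEa}) again. Your approach instead reopens the computation of Lemma \ref{sub} at the exponent $\wp$ and extracts strictness from the observation that $|c_{k}-1|=1-(1-1/n)^{\wp/\wp_{k}}<1/n$ for \emph{every} $k$ (because $\wp/\wp_{k}<1$), whereas at exponent $\wp_{m}$ the top level $k=m$ gives equality. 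This is a perfectly valid and more hands-on argument; it has the merit of pinpointing exactly which inequality in the chain becomes strict, but it costs you the extra bookkeeping that the paper sidesteps by invoking Theorem \ref{sint} one level higher (on $X_{m}$ with exponent $\wp_{m}$ rather than on $X$ with exponent $\wp$).
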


\begin{proof}
For each $k$ the subspace $Y_{k} \cup Z_{k}$ of $(X,d)$ has maximal $p$-negative type $\wp_{k}$.
And since $\wp_{k} \searrow \wp$ as $k \rightarrow \infty$ it follows that $(X,d)$ does
not have $p$-negative type for any $p > \wp$.

However, each subspace $X_{m} = Y_{m} \cup Z_{m}$ of $(X,d)$ has $\wp_{m}$-negative type by Lemma \ref{sub}.
By Theorem \ref{sint}, $X_{m}$ has strict $p$-negative type for all $p \in [0,\wp_{m})$. Thus
$(X,d)$ has strict $p$-negative type for all $p \in [0,\wp]$ as asserted.
\end{proof}

\noindent We conclude this paper with some final applications of Theorem \ref{sint}. Recall that
the maximal $q$-negative type of certain classical (quasi-) Banach spaces has been computed explicitly.
For example, suppose $0 < p \leq 2$ and that $\mu$ is a non-trivial positive measure, then the maximal $q$-negative type
of $L_{p}(\mu)$ is simply $p$. A short proof of this result, which is due to Schoenberg \cite{S3} for
$1 \leq p \leq 2$, may be found in \cite[Corollary 2.6 (a)]{LTW}. Theorem \ref{sint}
therefore applies as follows.

\begin{cor}\label{Lp}
Let $0 < p \leq 2$ and let $\mu$ be a positive measure. Then any metric space $(X,d)$ which is isometric
to a subset of $L_{p}(\mu)$ must have strict $q$-negative type for all $q \in [0,p)$.
\end{cor}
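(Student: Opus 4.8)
The plan is to reduce Corollary \ref{Lp} to an immediate application of Theorem \ref{sint}. The key observation is that the strict $q$-negative type property is inherited by isometric subspaces: if $(X,d)$ is isometric to a subset of $L_p(\mu)$, then the defining inequalities (\ref{ONE}) for $(X,d)$ are literally the same inequalities evaluated on the corresponding points of $L_p(\mu)$, since isometry preserves all pairwise distances $d(x_i,x_j)$. Hence whatever $q$-negative type (strict or not) holds for $L_p(\mu)$ is automatically transferred to $(X,d)$. So the real content is purely about $L_p(\mu)$ itself, and the subspace passage is a triviality.

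First I would record the cited fact that for $0 < p \leq 2$ and $\mu$ a non-trivial positive measure, the maximal $q$-negative type of $L_p(\mu)$ is exactly $p$; this is the result attributed to Schoenberg and given a short proof in \cite[Corollary 2.6 (a)]{LTW}. In particular, $L_p(\mu)$ has $p$-negative type. Since here $p > 0$, Theorem \ref{sint} applies \emph{verbatim} with the roles of its generic metric space and its generic exponent played by $L_p(\mu)$ and by the value $p$: it yields that $L_p(\mu)$ has strict $q$-negative type for every $q$ with $0 \leq q < p$.

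Next I would invoke the subspace heredity remarked above: because $(X,d)$ embeds isometrically into $L_p(\mu)$, each finite configuration $\{x_1,\ldots,x_k\}$ in $X$ with weights $\eta_1,\ldots,\eta_k$ summing to zero corresponds to an identically-weighted configuration in $L_p(\mu)$ realizing the same value of $\sum_{i,j} d(x_i,x_j)^q \eta_i\eta_j$. Strictness of the inequality in $L_p(\mu)$ therefore forces strictness in $(X,d)$ for each such non-trivial choice of weights, which is exactly strict $q$-negative type for $(X,d)$. Combining this with the conclusion of the previous paragraph gives strict $q$-negative type for all $q \in [0,p)$, as asserted.

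The only point requiring a word of care — though it is not a genuine obstacle — is the non-triviality hypothesis on $\mu$ implicit in the cited computation: one needs $L_p(\mu)$ to contain at least two distinct points so that the statement ``$p$-negative type'' is non-vacuous. Under the stated assumptions this is automatic, and everything else is a formal transfer of inequalities along the isometric embedding, so I expect no difficulty beyond correctly citing Theorem \ref{sint} and \cite[Corollary 2.6 (a)]{LTW}.
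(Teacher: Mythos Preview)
Your proposal is correct and matches the paper's approach: the paper simply records that $L_{p}(\mu)$ has $p$-negative type (citing \cite[Corollary 2.6 (a)]{LTW}) and then says ``Theorem \ref{sint} therefore applies,'' with the isometric-subspace heredity left implicit. The only cosmetic difference is that you apply Theorem \ref{sint} to $L_{p}(\mu)$ and then pass to $(X,d)$, whereas one could equally well pass the ordinary $p$-negative type to $(X,d)$ first and then invoke Theorem \ref{sint}; either order works and neither adds content.
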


\noindent Corollary \ref{equality} and Theorem \ref{sint} combine to provide the following characterization
of the supremal $p$-negative type of a finite metric space in terms of zeros of the simplex gap
functions $\gamma_{D}^{q}$.

\begin{cor}\label{finite}
If the supremal $p$-negative type $\wp$ of a finite metric space $(X,d)$ is finite, then:
\[
\wp = \min \{ q \, | \, q > 0 \text{ and } \gamma_{D}^{q}(\vec{\omega}) = 0 \text{ for some normalized (s,t)-simplex }
D(\vec{\omega}) \subseteq X \}.
\]
\end{cor}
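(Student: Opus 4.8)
The plan is to show that $\wp$ both belongs to the set
\[
S = \{\, q > 0 \mid \gamma_{D}^{q}(\vec{\omega}) = 0 \text{ for some normalized } (s,t)\text{-simplex } D(\vec{\omega}) \subseteq X \,\}
\]
and is a lower bound for it; since an element of a set that also bounds it from below must be its minimum, this will give $\wp = \min S$. First I would record that $(X,d)$ is finite, so $\wp > 0$ by the remark following Definition \ref{types}, while $\wp < \infty$ by hypothesis. Hence Corollary \ref{equality} applies verbatim and produces a normalized $(s,t)$-simplex $D(\vec{\omega})$ with $\gamma_{D}^{\wp}(\vec{\omega}) = 0$. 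This exhibits $\wp$ as a genuine element of $S$, and in particular shows both that $S \neq \emptyset$ and that the infimum is actually attained at $\wp$.

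The second and final step is to verify that no $q \in (0,\wp)$ lies in $S$. Here I would invoke Theorem \ref{sint}. Since $p$-negative type holds on the closed interval $[0,\wp]$, the space $(X,d)$ has $\wp$-negative type with $\wp > 0$, so Theorem \ref{sint} guarantees that $(X,d)$ has strict $q$-negative type for every $q \in [0,\wp)$. Translating this through Theorem \ref{REMGR} (b) and the remark following Definition \ref{SGAP}, strict $q$-negative type is precisely the assertion that $\gamma_{D}^{q}(\vec{\omega}) > 0$ for every normalized $(s,t)$-simplex $D(\vec{\omega})$ in $X$. Thus for each $q \in (0,\wp)$ no simplex gap can vanish, so $q \notin S$; equivalently, $\wp$ is a lower bound for $S$. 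Combining the two steps yields $\wp = \min S$.

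I do not anticipate a substantive obstacle, since this corollary is a clean synthesis of two earlier results: the difficulty has already been absorbed into Corollary \ref{equality} (which rests on the compactness argument of Theorem \ref{mainthm2} together with the fact that negative type holds on closed intervals) and into Theorem \ref{sint}. The only point deserving care is the distinction between infimum and minimum in the statement; this is exactly why one needs the \emph{exact} vanishing $\gamma_{D}^{\wp}(\vec{\omega}) = 0$ furnished by Corollary \ref{equality}, rather than a limiting or approximate statement, to conclude that the bound $\wp$ is genuinely achieved and hence is the minimum.
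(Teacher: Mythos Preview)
Your proposal is correct and matches the paper's own approach exactly: the paper simply states that Corollary \ref{equality} and Theorem \ref{sint} combine to yield the result, and you have spelled out precisely how --- Corollary \ref{equality} places $\wp$ in the set, while Theorem \ref{sint} (via the closed-interval property of negative type) rules out any smaller $q$. Your attention to the distinction between infimum and minimum, and to the fact that $\wp > 0$ for finite spaces, is appropriate and fills in the only details the paper leaves implicit.
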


\noindent In certain instances Theorem \ref{sint} provides a second description of the maximal $p$-negative type
of a metric space.

\begin{cor}\label{max}
Let $\wp > 0$. If a metric space $(X,d)$ has $\wp$-negative type but not strict $\wp$-negative type, then
$\wp$ is the maximal $p$-negative type of $(X,d)$.
\end{cor}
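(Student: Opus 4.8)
The plan is a short argument by contradiction whose entire weight rests on Theorem \ref{sint}. Since $(X,d)$ is assumed to have $\wp$-negative type, the maximal $p$-negative type of $(X,d)$ is automatically at least $\wp$; to prove equality I only need to exclude the possibility that it exceeds $\wp$. So I would suppose to the contrary that $\wp$ is not the maximal $p$-negative type. By the very meaning of maximality, this forces the existence of some value $p > \wp$ for which $(X,d)$ still has $p$-negative type.

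With such a $p$ in hand the conclusion follows immediately. Because $p > \wp > 0$, Theorem \ref{sint} guarantees that $(X,d)$ has strict $q$-negative type for every $q$ with $0 \leq q < p$. In particular, since $\wp < p$, the space $(X,d)$ has strict $\wp$-negative type. This directly contradicts the hypothesis that $(X,d)$ fails to have strict $\wp$-negative type. Hence no $p > \wp$ with $p$-negative type can exist, and $\wp$ is indeed the maximal $p$-negative type of $(X,d)$.

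I do not expect any genuine obstacle: all of the analytic substance --- the integral representation of $x^{\alpha}$ from Lemma \ref{intlem} and the Schoenberg exponential-kernel argument via Theorem \ref{Scho} --- is already absorbed into Theorem \ref{sint}, so this corollary is essentially just the contrapositive reading of that theorem. The only point requiring slight care is the logical bookkeeping, namely that the ``failure of strictness at the top of the negative-type range'' is precisely the obstruction to pushing ordinary $p$-negative type past $\wp$. Once the negation of maximality is unwound into the existence of a genuinely larger exponent $p$, Theorem \ref{sint} closes the gap at once.
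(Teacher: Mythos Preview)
Your proposal is correct and matches the paper's approach exactly: the paper states Corollary \ref{max} without proof, presenting it as an immediate consequence of Theorem \ref{sint}, and your contrapositive reading of that theorem is precisely the intended one-line argument.
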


\noindent It follows from Kelly \cite{K1, K2}
that any $k$-sphere $\mathbb{S}^{k}$ endowed with the usual geodesic metric
is $\ell_{1}$-embeddable and therefore of $1$-negative type.
On the other hand, Hjorth et al.\ \cite[Theorem 9.1]{HLM} have shown that a finite
isometric subspace $(X,d)$ of a $k$-sphere $\mathbb{S}^{k}$ is of strict $1$-negative type
if and only if $X$ contains at most one pair of antipodal points. These comments and Corollary \ref{max}
imply the next corollary.

\begin{cor}
A finite isometric subspace $(X,d)$ of a $k$-sphere $\mathbb{S}^{k}$ has maximal $p$-negative type $= 1$
if and only if $X$ contains at least two pairs of antipodal points.
\end{cor}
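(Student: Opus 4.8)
The plan is to prove the biconditional by combining the two external facts the excerpt has set up: the characterization of strict $1$-negative type for spherical subspaces from Hjorth et al.\ \cite[Theorem 9.1]{HLM}, together with Corollary \ref{max}, which converts a failure of strictness into an assertion about maximality of the negative type. First I would recall the standing facts: by Kelly \cite{K1, K2} every $k$-sphere $\mathbb{S}^{k}$ with its geodesic metric is $\ell_{1}$-embeddable, hence of $1$-negative type, and therefore so is any isometric subspace $(X,d) \subseteq \mathbb{S}^{k}$. Thus $(X,d)$ always has $1$-negative type, and the only question is whether the supremal $p$-negative type equals $1$ or exceeds it.

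The forward direction is the cleaner one to attack via the contrapositive. Suppose $X$ contains at most one pair of antipodal points. Then by \cite[Theorem 9.1]{HLM} the space $(X,d)$ has \emph{strict} $1$-negative type. By Theorem \ref{sint}, strict $1$-negative type together with $1$-negative type would be consistent with the supremal value being strictly larger than $1$; indeed since $(X,d)$ is finite and has strict $1$-negative type, Corollary \ref{maincor} gives strict $q$-negative type on an interval $[1, 1+\zeta)$ with $\zeta > 0$, so the maximal $p$-negative type is strictly greater than $1$. Contrapositively, if the maximal $p$-negative type equals $1$, then $X$ must contain at least two pairs of antipodal points.

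For the reverse direction I would run Corollary \ref{max} directly. Assume $X$ contains at least two pairs of antipodal points. Then by \cite[Theorem 9.1]{HLM} the space $(X,d)$ is \emph{not} of strict $1$-negative type. Since $(X,d)$ does have $1$-negative type (by the Kelly embedding), we are exactly in the hypothesis of Corollary \ref{max} with $\wp = 1$: the space has $1$-negative type but not strict $1$-negative type. Corollary \ref{max} then concludes that $1$ is the maximal $p$-negative type of $(X,d)$, which is precisely what we want.

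The main conceptual obstacle is simply lining up the two characterizations so that ``strict $1$-negative type'' and ``maximal negative type $= 1$'' dovetail cleanly, and this is exactly what Corollary \ref{max} is engineered to do: it is the precise bridge asserting that a non-strict but valid $\wp$-negative type forces $\wp$ to be maximal. There are no delicate estimates to grind through here; the real work was already done in establishing Corollary \ref{max} (via Corollary \ref{equality} and Theorem \ref{sint}) and in the spherical classification of \cite{HLM}. The only point requiring a little care is ensuring that the geodesic sphere metric genuinely makes antipodal points the extreme case in \cite[Theorem 9.1]{HLM}, but since I may invoke that theorem as stated, the proof reduces to a short two-line application of Corollary \ref{max} in each direction.
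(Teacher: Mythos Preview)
Your proposal is correct and matches the paper's intended argument. The paper simply states that the corollary follows from the preceding comments (Kelly's $\ell_{1}$-embedding giving $1$-negative type, and Hjorth et al.'s characterization of strict $1$-negative type via antipodal pairs) together with Corollary~\ref{max}; you have spelled out both directions explicitly, invoking Corollary~\ref{maincor} for the forward direction where the paper is silent, but this is the natural companion to Corollary~\ref{max} and is precisely what is needed.
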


\noindent Recall, following Blumenthal \cite{B}, that a \textit{semi-metric space} is required to satisfy
all of the axioms of a metric space except (possibly) the triangle inequality.

\begin{rem}\label{lastrem}
In closing we note that Theorems \ref{mainthm}, \ref{mainthm2} and \ref{sint} hold (more generally) for all
finite semi-metric spaces $(X,d)$. The same goes for Corollaries \ref{maincor}, \ref{strict}, \ref{equality},
\ref{finite} and \ref{max}. This is because the triangle inequality has played no r\^{o}le in any
of the definitions or computations of this paper.
\end{rem}

\section*{Acknowledgments}
We would like to thank the referee for making detailed comments on the preliminary version
of this paper and --- \textit{in particular} --- for asking for a more in-depth analysis of
the intervals on which a metric space may have (strict) $p$-negative type.

\bibliographystyle{amsalpha}

\end{document}